\theoremstyle{plain}
\newtheorem{thm}{Theorem}[section]
\newtheorem{lem}[thm]{Lemma}
\newtheorem{prop}[thm]{Proposition}
\newtheorem{cor}[thm]{Corollary}
\newtheorem{conj}[thm]{Conjecture}
\newtheorem{rmk}[thm]{Remark}
\newtheorem{ques}[thm]{Question}
\numberwithin{equation}{section}
\DeclareMathOperator{\Stab}{Stab}
\newcommand{\NN}{\mathbb{N}}      %
\newcommand{\ZZ}{\mathbb{Z}}      
\newcommand{\QQ}{\mathbb{Q}} 
\newcommand{\RR}{\mathbb{R}}      
\newcommand{\CC}{\mathbb{C}}      
\newcommand{\HH}{\mathbb{H}}      
\DeclareMathOperator{\PSL}{PSL}
\DeclareMathOperator{\SL2}{SL_2}
\DeclareMathOperator{\rad}{rad}
\begin{document}

\title[On non-freeness of groups generated by two parabolic matrices with rational parameters]{On non-freeness of groups generated by two parabolic matrices with rational parameters: limit points and the orbit test}


\author{Wonyong Jang}
\address{Department of Mathematical Sciences, KAIST,
291 Daehak-ro, Yseong-gu, 
34141 Daejeon, South Korea}
\email{jangwy@kaist.ac.kr}

\author{Dongryung Yi}
\address{Department of Mathematical Sciences, KAIST,
291 Daehak-ro, Yseong-gu, 
34141 Daejeon, South Korea}
\email{j-invariant@kaist.ac.kr}


\maketitle


\begin{abstract}
 For $\alpha \in \RR$, let $$G_{\alpha}:= \left< \begin{bmatrix}
1 & 1 \\ 0 & 1 \end{bmatrix} , \begin{bmatrix}
1 & 0 \\ \alpha & 1 \end{bmatrix}
\right> < \SL2(\RR).$$
 K. Kim and the first author established the orbit test, which provides a sufficient condition for $G_{\alpha}$ not to be a free group of rank 2.
 In this article, we present two main applications of the orbit test.
 First, using the corresponding modulo homomorphism, we show that the converse of the orbit test does not hold. 
 In particular, we construct explicit counterexamples, all of which are rational. 
 As another application, we prove that
$$ \frac{2 n_3 + 2 n_5 - 1}{n_3 n_4 (2 n_5 - 1)} \quad \quad (n_3, n_4 \neq 0)$$
is a limit point of limit points of non-free rational numbers.
 Moreover, we prove that
 $$3 + \frac{3}{2 (9 n - 1)} \quad \text{and} \quad 3 + \frac{9 n + 5}{3 (2 n + 1) (9 n + 4)}$$
 are non-free rational numbers which converge to $3$.
 Their construction relies on the orbit test together with a modified Pell's equation.
\end{abstract}

\vspace{1.2cm} 


\section{Introduction}
 For a complex number $\alpha$, consider the subgroup $G_{\alpha}$ of $\SL2 (\CC)$, generated by two parabolic matrices:
$$A := \begin{bmatrix} 1 & 1 \\ 0 & 1 \end{bmatrix}, \ B_{\alpha} := \begin{bmatrix} 1 & 0 \\ \alpha & 1 \end{bmatrix}, \ G_{\alpha} := \left< A, B_{\alpha} \right> < \SL2(\CC).$$
The problem of determining whether $G_{\alpha}$ is a free group of rank 2 or not has a long history and has been actively studied in the literature.
We say that $\alpha \in \CC$ is a \emph{free number} if $G_{\alpha}$ is a free group of rank 2, and that $\alpha \in \CC$ is a \emph{non-free number}, or a \emph{relation number}, otherwise.

 Many people have contributed to this problem, and different conventions have been used in the literature.
 For $\alpha_1, \alpha_2 \in \CC$, if we let
 $$A_{\alpha_1} := \begin{bmatrix} 1 & \alpha_1 \\ 0 & 1 \end{bmatrix}, \ B_{\alpha_2} := \begin{bmatrix} 1 & 0 \\ \alpha_2 & 1 \end{bmatrix}, \ H_{\alpha_1, \alpha_2} := \left< A_{\alpha_1}, B_{\alpha_2} \right> < \SL2(\CC),$$
 then $H_{\alpha_1, \alpha_2} \cong H_{1, \alpha_1 \alpha_2} = G_{\alpha_1 \alpha_2}$ \cite{chang1958certain}.
 Many previous results were obtained in the settings $H_{2, \lambda}, H_{\mu, \mu}$ and $H_{\alpha, 1}$.
Nonetheless, we adopt the notation $G_{\alpha} = H_{1, \alpha}$ since this formulation is most convenient for the orbit test, which is the main focus of this article.
Moreover, the above isomorphism tells us that all previous results can be converted into our setting.

The ultimate goal of this line of research is to characterize non-free numbers.
First, almost all complex numbers are free numbers due to the following results.

\begin{prop} \label{Large_Free}
 The following complex numbers are free:
\begin{enumerate}
    \item transcendental numbers \cite{fuchs1940certain},
    \item real numbers $\alpha$ with $|\alpha| \geq 4$ (\cite{sanov1947property} and \cite{brenner1955quelques}), and
    \item elements of the Riley slice of the Schottky space \cite{keen1994riley}.
\end{enumerate}
\end{prop}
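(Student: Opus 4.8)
The three parts are logically independent and call for three quite different techniques, so the plan is to treat them separately. A common starting observation for parts (1) and (2): writing a reduced word as $w = w(A, B_\alpha)$ and using $A^n = \begin{bmatrix} 1 & n \\ 0 & 1 \end{bmatrix}$ and $B_\alpha^m = \begin{bmatrix} 1 & 0 \\ m\alpha & 1 \end{bmatrix}$, every entry of $w$ is a polynomial in $\alpha$ with integer coefficients, whose degree is controlled by the number of $B_\alpha$-syllables in $w$.

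For (1) the argument is purely algebraic. I would suppose $w$ is a nontrivial reduced word yet $w(A, B_\alpha) = I$, group $w$ into syllables $A^{n_1} B_\alpha^{m_1} \cdots$ with all exponents nonzero, and track the top-degree coefficient in $\alpha$: each $B_\alpha$-syllable raises the degree by one, and a short induction shows that the leading coefficient of a suitable entry is, up to sign, a product of (some of) the nonzero exponents $n_i, m_j$, hence nonzero. Consequently at least one of the four scalar identities $w(A,B_\alpha)_{ij} = \delta_{ij}$ is nonconstant, exhibiting $\alpha$ as a root of a nonzero integer polynomial and contradicting transcendence. The only bookkeeping is to separate cases according to whether $w$ begins or ends in $A$ versus $B_\alpha$.

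For (2) I would use the ping-pong lemma. First, conjugating by $\mathrm{diag}(-1,1)$ gives $G_\alpha \cong G_{-\alpha}$, so I may assume $\alpha \geq 4$; then conjugating by $\mathrm{diag}(d, d^{-1})$ with $d^2 = \sqrt\alpha$ symmetrizes the generators to $\begin{bmatrix} 1 & t \\ 0 & 1 \end{bmatrix}$ and $\begin{bmatrix} 1 & 0 \\ t & 1 \end{bmatrix}$ with $t = \sqrt\alpha \geq 2$. Acting on $\RR^2$, I set $X = \{(x,y) : |x| > |y|\}$ and $Y = \{(x,y) : |y| > |x|\}$; the estimate $|x + nty| \geq |n|t\,|y| - |x| > |y|$ for $(x,y) \in Y$ and $|n| \geq 1$ (using $t \geq 2$) shows that every nonzero power of the first generator sends $Y$ into $X$, and symmetrically every nonzero power of the second sends $X$ into $Y$. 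Ping-pong then forces every nontrivial reduced word to displace a suitable vector across the diagonal, so no such word is the identity and $G_\alpha$ is free.

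Part (3) is the deepest, and I expect it to be the main obstacle, since it is not elementary and genuinely uses Kleinian group theory. Here $\alpha$ lies in the Riley slice of Schottky space, which by construction parametrizes discrete, free, geometrically finite Kleinian groups generated by two parabolics. The cleanest route is to invoke this structure directly: exhibit a fundamental domain for the action of $G_\alpha$ on $\HH^3$ (equivalently on the domain of discontinuity in $\widehat{\CC}$) and apply the Klein--Maskit combination theorem, whose conclusion is that $G_\alpha$ is the free product of its two cyclic parabolic factors, hence rank-$2$ free. Alternatively, one deforms a model Schottky group quasiconformally through the slice; since quasiconformal deformation preserves the isomorphism type and the base group is free, so is $G_\alpha$. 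Rather than reprove discreteness from scratch, I would lean on the analysis of the slice in \cite{keen1994riley} and read off freeness as a consequence of the Schottky structure established there.
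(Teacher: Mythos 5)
The paper never proves this proposition: it is a survey statement, and the bracketed citations \emph{are} the paper's argument. Your proposal is therefore correct but more self-contained than what the paper does for parts (1) and (2), and it reproduces exactly the classical proofs from the cited sources. For (1), the leading-coefficient induction is the Fuchs--Rabinowitsch argument: writing a nontrivial reduced word in syllables $A^{n_i}$, $B_\alpha^{m_j}$, each $B_\alpha$-syllable raises the $\alpha$-degree by one and the top coefficient of the $(1,1)$ (or $(2,1)$) entry is a product of the nonzero exponents, so a relation would make $\alpha$ algebraic. For (2), your symmetrization ($G_\alpha \cong G_{-\alpha}$ via $\mathrm{diag}(-1,1)$, then $\mathrm{diag}(d,d^{-1})$ with $d^2=\sqrt{\alpha}$ giving both generators parameter $t=\sqrt{\alpha}\ge 2$) followed by ping-pong on the cones $X=\{|x|>|y|\}$ and $Y=\{|y|>|x|\}$ is the Sanov--Brenner proof; one detail worth making explicit is that the inequality $|x+nty| \ge |n|t|y|-|x| > |y|$ remains \emph{strict} at $t=2$ precisely because membership in $Y$ gives the strict inequality $|x|<|y|$, which is what lets the argument cover the boundary case $|\alpha|=4$ included in the statement. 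For (3) you do essentially what the paper does, namely defer to \cite{keen1994riley}: that is the right call, since freeness is built into the structure of the Riley slice (discrete, geometrically finite groups that are free products of the two parabolic cyclic factors, via the Klein--Maskit combination theorem), and reproving discreteness from scratch would be out of scope. The only caveat is that your part (3) is a plan to cite rather than a proof; acknowledged as such, the proposal as a whole is sound and consistent with the literature the paper points to.
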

\noindent We refer to \cite{lyubich1988free}, \cite{keen1994riley}, \cite{komori1998riley}, and \cite{akiyoshi2007punctured} for details on the Schottky space and the Riley slice.
 These results provide a geometric perspective on the problem.
 Also, readers can find simple descriptions of free numbers in \cite{chang1958certain}, \cite{lyndon1969groups}, and \cite{S1}.
 Note that if $a,b \in \CC$ are algebraically conjugate, then the corresponding Galois conjugation induces a group isomorphism $G_a \cong G_b$.
 Combining with Proposition \ref{Large_Free}, this implies that the set of free algebraic numbers is dense in $\CC$.
 On the other hand, it is known that the set of non-free numbers is dense in the open unit disk $\{ z \in \CC : |z|<1 \}$ by Ree \cite{ree1961certain}.
 Furthermore, the set of all non-free numbers is dense in the closed interval $[-4,4] \subset \RR$ \cite{chang1958certain}.
 However, it remains challenging to determine whether a given number is free or not.
 In particular, the following conjecture is a long-standing open problem.

\begin{conj} \label{M-Conj}
 Every rational number $\alpha$ with $|\alpha|<4$ is non-free.
\end{conj}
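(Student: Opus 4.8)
The plan is to reduce the conjecture to a uniform dynamical statement that the orbit test is designed to certify. First I would normalize: conjugation by $\begin{bmatrix} 1 & 0 \\ 0 & -1 \end{bmatrix}$ sends $A \mapsto A^{-1}$ and $B_{\alpha} \mapsto B_{-\alpha}$, so $G_{\alpha} \cong G_{-\alpha}$ and it suffices to treat $\alpha = p/q \in [0,4)$ with $p,q$ coprime positive integers; the value $\alpha = 0$ and the integer cases $\alpha \in \{1,2,3\}$ are classical. The group $G_{p/q}$ acts on $\mathbb{P}^1(\QQ)$ by the M\"obius transformations $A \colon z \mapsto z+1$ and $B_{p/q} \colon z \mapsto qz/(pz+q)$, and non-freeness is equivalent to the existence of a nontrivial reduced word $W$ in $A, B_{p/q}$ that acts trivially on $\mathbb{P}^1(\QQ)$, i.e.\ $W = \id$ in $G_{p/q}$.

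Next I would run the orbit test on the $G_{p/q}$-action by tracking the orbit of a suitable rational basepoint $x_0$ under the positive monoid generated by $A$ and $B_{p/q}$. Writing each orbit point as a reduced fraction $a/b$, the generators act by $a/b \mapsto (a+b)/b$ and $a/b \mapsto qa/(pa+qb)$, so I would follow the evolution of the $\ell$-adic valuations of numerator and denominator at the primes $\ell \mid pq$, together with the archimedean height. The orbit test succeeds precisely when this bookkeeping forces the combinatorics of the orbit to collapse, producing a nontrivial reduced word that acts as the identity; the target theorem is that for every $p/q \in (0,4)$ such a collapse eventually occurs.

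I would then attempt to prove the key lemma powering this collapse, namely an eventual finiteness or periodicity statement: either the orbit of $x_0$ is confined to fractions of height bounded in terms of $p,q$, or the valuation data at the primes dividing $pq$ enters a cycle, and in either case a relation is read off. This is exactly the step the paper's machinery models: the modulo homomorphism and the modified Pell equation are used to certify such a coincidence along the explicit families $3 + \tfrac{3}{2(9n-1)}$ and $3 + \tfrac{9n+5}{3(2n+1)(9n+4)}$ accumulating at $3$, which shows how the bookkeeping is made effective for special sequences of denominators.

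The hard part, and the reason the conjecture remains open, is uniformity. As the paper itself demonstrates, the orbit test is only a \emph{sufficient} condition and its converse fails, so there exist non-free rationals it cannot detect; hence no single application of the test can cover all of $(0,4) \cap \QQ$. A genuine proof would therefore require either a strengthening of the orbit test to a necessary-and-sufficient criterion, or a global arithmetic argument---for instance a simultaneous reduction-mod-$\ell$ or adelic analysis across all primes dividing $pq$---that bounds the orbit uniformly in $p/q$. Controlling the dynamics as the denominator $q$ grows, where the orbit of $x_0$ need not be eventually periodic and the height can a priori escape, is the central obstacle.
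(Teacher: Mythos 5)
This statement is Conjecture \ref{M-Conj}, which the paper presents as a long-standing open problem; the paper contains no proof of it and explicitly says in Section \ref{Sec-Seq to 3} that it remains open. Your proposal is likewise not a proof: it is a research outline whose pivotal step --- the ``key lemma'' that the orbit bookkeeping always collapses, so that for every rational $\alpha \in (0,4)$ some $g \in G_{\alpha}$ sends $0$ or $\infty$ to $\tfrac{1}{2}$ --- is exactly the missing content, and your final paragraph concedes you cannot supply it. So the gap is not a repairable step inside an otherwise complete argument; the entire engine is absent, and what remains is a restatement of the problem in dynamical language.

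Moreover, the specific target theorem you formulate is provably false, by this paper's own results. Proposition \ref{Converse_OT1} shows that if $G_{\frac{q}{p}}$ satisfies the orbit test then the numerator $|q|$ must be $2$ or odd, and Corollary \ref{Converse_OT2} exhibits infinitely many non-free rationals with even numerator that never satisfy the test. Hence no refinement of your orbit-tracking bookkeeping --- $\ell$-adic valuations, heights, adelic analysis --- can force the orbit of $0$ or $\infty$ to reach $\tfrac{1}{2}$ when the numerator is even and of absolute value at least $4$: for such $\alpha$ the point $\tfrac{1}{2}$ is simply not in those orbits, even when $\alpha$ is non-free. Your sketch also wavers between two readings of ``collapse'': if it means satisfying the orbit test, the target is false as just explained; if it means finding any nontrivial relation, the target is the conjecture itself and the argument is circular. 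A genuine attack along arithmetic lines would need a different certificate of non-freeness, or a global statement such as Conjecture \ref{Carl-Conj} (that $G_{\frac{q}{p}}$ is a finite-index congruence subgroup of $\SL2\left(\ZZ\left[\tfrac{1}{p}\right]\right)$), which the paper discusses precisely because it too is unproven.
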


This conjecture was first suggested in \cite{lyndon1969groups} in the setting of the groups $H_{2,\lambda}$ and $H_{\mu,\mu}$.
Subsequently, the conjecture was reformulated by S. Kim and T. Koberda \cite{kim2022non}.
There has been considerable effort devoted to resolving this conjecture.
It is straightforward to show that the integers $1,2,$ and $3$ are non-free.
Moreover, Farbman proved in \cite{farbman1995non} that all rational numbers $\frac{q}{p}$ ($p, q \in \ZZ$) with $|q|\leq 16$ and $\left| \frac{q}{p} \right|<4$ are non-free.
This result is extended as follows:

\begin{thm}[Theorem 1.4 in \cite{kim2022non}] 
 Every rational number $\frac{q}{p}$ with $|q| \leq 27$, $|q| \neq 24$, and $\left| \frac{q}{p} \right|<4$ is non-free.
\end{thm}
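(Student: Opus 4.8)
The plan is to produce, for each admissible $\alpha = q/p$, an explicit nontrivial relation in $G_\alpha$ and to certify its existence through the orbit test, the point being to organize the infinitely many denominators $p$ attached to a fixed numerator $q$ into finitely many verifiable families. First I would cut down the scope by symmetry. Conjugating by $\mathrm{diag}(-1,1)$ sends $A \mapsto A^{-1}$ and $B_\alpha \mapsto B_{-\alpha}$, so $G_\alpha \cong G_{-\alpha}$; combined with $q/p = (-q)/(-p)$ this lets me assume $p,q > 0$, and after reducing the fraction I may assume $\gcd(p,q)=1$, with $q$ the numerator satisfying $1 \le q \le 27$, $q \neq 24$, and the inequality $q/p < 4$, i.e. $p > q/4$. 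Note that for $|\alpha|<4$ none of the free criteria of Proposition~\ref{Large_Free} apply, so for each pair $(q,p)$ the task is genuinely to exhibit a reduced word $W$ in $A,B_\alpha$ with $W = \pm I$ in $\SL2(\RR)$, equivalently a coincidence in the $G_\alpha$-action on $\mathbb{P}^1(\QQ)$ by $A: z \mapsto z+1$ and $B_\alpha: z \mapsto z/(\alpha z + 1)$.

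The core step is to run the orbit test. For a fixed $\alpha = q/p$ I would track the $G_\alpha$-orbit of a base point such as $\infty$: since $B_\alpha\cdot\infty = p/q$ and the action $B_\alpha(x/y) = px/(qx+py)$ feeds the numerator $q$ into every new denominator, $q$ functions as a complexity parameter controlling the arithmetic of the orbit. The orbit test (of K.\ Kim and the first author) should convert a recurrence in this orbit into a bona fide relation, and this is exactly the sufficient condition for non-freeness I am entitled to invoke. So the real content is to verify, for every admissible $(q,p)$, that the orbit-test certificate is actually met.

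The main obstacle is uniformity in the denominator: for a fixed numerator $q$ there are infinitely many admissible $p$, so no single relation word can suffice. I would resolve this by showing that the relevant orbit-test data depends on $p$ through only finitely much information, namely a residue of $p$ modulo a modulus determined by $q$ together with the trapping inequality $|\alpha| < 4$, which confines the orbit to a bounded region and forces the recurrence to occur within a number of steps bounded in terms of $q$. Concretely, I expect to assemble, for each $q$, a finite list of relation templates whose vanishing is equivalent to a congruence-and-inequality condition on $p$, and then to check that these templates jointly cover every $p > q/4$ coprime to $q$. This reduces the theorem to a finite, in practice computer-assisted, verification; for the smaller numerators it recovers Farbman's range $|q| \le 16$, and the new work is to push the cover up to $q = 25, 26, 27$.

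The hardest part, and the source of the exclusion $q \neq 24$, is precisely this uniform covering for the larger numerators, where the certifying words are longest and the residue-class bookkeeping most delicate. For $q = 24$ I expect the templates to fail to exhaust all admissible denominators, so the orbit test does not cover that numerator by this method; I would therefore record $24$ as a genuine exceptional value rather than attempt to force it, exactly as the statement does.
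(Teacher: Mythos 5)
There is a genuine gap here, and it is fatal to the entire strategy rather than to a single step. You route every numerator $q \le 27$ through the orbit test, but the orbit test provably cannot certify most of the numerators the statement covers: Proposition~\ref{Converse_OT1} of this very paper shows that if some $g \in G_{q/p}$ satisfies $g \cdot 0 = \frac{1}{2}$ or $g \cdot \infty = \frac{1}{2}$, then $|q|$ must be $2$ or odd. Hence for every even numerator $4 \le |q| \le 26$ (in lowest terms) --- that is, for $q = 4, 6, 8, \dots, 26$, all of which the theorem includes --- no choice of word, template, or residue-class bookkeeping can ever make the orbit test fire, and your ``uniform covering'' step simply cannot exist for those $q$. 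Indeed, this is exactly the point of Corollary~\ref{Converse_OT2}: the non-free rationals with even numerators between $4$ and $27$ supplied by the theorem under discussion are the paper's counterexamples showing the converse of the orbit test is false. A successful proof of the statement by your method would therefore contradict the paper's own first main result.

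Two further points. First, this statement is not proved in the paper at all; it is quoted as Theorem~1.4 of \cite{kim2022non}, where it is established by extending Farbman's explicit-relation techniques \cite{farbman1995non}, not by the orbit test, and the exclusion $|q| = 24$ is a limitation of that method --- within your framework $24$ is not special, since it fails the orbit test for the same parity reason as $4, 6, \dots, 26$, so your proposed explanation of the exception is incorrect. Second, even setting parity aside, the key quantitative claim --- that $|\alpha| < 4$ ``confines the orbit to a bounded region and forces the recurrence to occur within a number of steps bounded in terms of $q$'' --- is asserted without argument and is not a feature the orbit test provides; the paper notes (citing \cite{gutan2014diophantine}) that as $\alpha \to 4$ the lengths of all relations in $G_\alpha$ diverge, which cuts against any such uniform bound depending only on $q$.
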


In addition, there are many results on the characterization of non-free numbers via polynomials (\cite{riley1972parabolic}, \cite{brenner1975non}, \cite{bamberg2000non}). 
In particular, an interesting connection between non-free numbers and the generalized Chebyshev polynomials was established in \cite{choi2025non}.
This relationship was then used to prove that every rational number $\frac{q}{p}$ with $|p| \leq 22$ and $\left| \frac{q}{p} \right| <4$ is non-free with only six exceptions.
For further results on non-free rational numbers, we refer the reader to \cite[Theorem~7.7]{gilman2008structure}.

In \cite{MR4237486}, Kyeongro Kim and the first author introduced \emph{the orbit test}.
The orbit test provides a sufficient condition for a real number to be non-free.

\begin{prop}[The orbit test, Proposition 5.4 in \cite{MR4237486}]
Let $\alpha \in \RR$.
 If there exists $g \in G_{\alpha}$ such that $$g \cdot 0 = \frac{1}{2} \textnormal{ or } g \cdot \infty = \frac{1}{2},$$
 then $\alpha$ is non-free.
\end{prop}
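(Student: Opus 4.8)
The plan is to prove the contrapositive: assuming $G_{\alpha}$ is a rank-$2$ free group, I will show that $\tfrac12$ lies in neither the $G_{\alpha}$-orbit of $0$ nor that of $\infty$. Throughout I work with the M\"obius action on $\overline{\RR}=\RR\cup\{\infty\}$, under which $A\colon z\mapsto z+1$ fixes $\infty$ and $B_{\alpha}\colon z\mapsto z/(\alpha z+1)$ fixes $0$. The first thing I would record is \emph{why} $\tfrac12$ is distinguished: it is the boundary point of the fundamental interval $[-\tfrac12,\tfrac12)$ of $\langle A\rangle$, and it is the unique fixed point in $\RR$ of the reflection $\rho(z)=1-z$, which satisfies $\rho A\rho^{-1}=A^{-1}$ and $\rho(\infty)=\infty$. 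Thus $\tfrac12$ is exactly the edge of the $A$-action that is stabilized by the reflection symmetry interchanging its two sides, and this is the structural feature I intend to exploit.

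Next I would set up the normal-form dictionary available in the free case. Writing $G_{\alpha}\cong\langle a\rangle\ast\langle b\rangle$, every point of the orbit $G_{\alpha}\cdot\infty$ is $w\cdot\infty$ for a reduced word $w$ ending (on the side acting first) in a nonzero power of $b$, so that $\infty$ is genuinely moved; I would then track the location of $w\cdot\infty$ by the itinerary of $w$ through the partition of $\overline{\RR}$ into the region controlled by powers of $A$ and the region controlled by powers of $B_{\alpha}$. Carrying this out, the itinerary assigns to each orbit point a well-defined \emph{side}, a locally constant coloring that can only change across the critical points $\ZZ+\tfrac12$, the $\langle A\rangle$-orbit of $\tfrac12$. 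The statement for $G_{\alpha}\cdot 0$ follows after exchanging the roles of the two cusps $0$ and $\infty$, and the hypothesis $g\cdot\infty=\tfrac12$ reduces to $g\cdot 0=\tfrac12$ because $\rho$ fixes $\infty$ as well; so it suffices to treat one case.

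The crux is then to exploit the reflection at $\tfrac12$. If some orbit point landed exactly on $\tfrac12$, then, since $\rho$ fixes $\tfrac12$ while interchanging the two sides of the critical edge, that point would be forced to carry two opposite colors at once. Unwinding this contradiction is designed to produce two distinct reduced words $w_{1}\neq w_{2}$, each ending in a power of $b$, with $w_{1}\cdot\infty=w_{2}\cdot\infty=\tfrac12$. Then $w_{2}^{-1}w_{1}$ fixes $\infty$; in the free case $\Stab(\infty)=\langle A\rangle$, so $w_{2}^{-1}w_{1}$ is a power of $a$, which together with the normal forms of $w_{1}$ and $w_{2}$ forces $w_{1}=w_{2}$, a contradiction. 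Equivalently, one reads off a nonempty reduced word evaluating to the identity, i.e. a relation, and hence $\alpha$ is non-free.

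The step I expect to be the main obstacle is making this coloring/itinerary genuinely single-valued and consistent under the sole assumption of freeness, with no appeal to discreteness, since for $|\alpha|<4$ the group $G_{\alpha}$ is typically non-discrete and the conclusion must be purely algebraic. Concretely, one must establish in the free case that $\Stab(\infty)=\langle A\rangle$ and $\Stab(0)=\langle B_{\alpha}\rangle$ exactly, that the two cusps are inequivalent, and that the side-assignment is well defined along the entire orbit. This is the combinatorial heart of the matter, and it is precisely what the lamination and coloring framework of \cite{MR4237486} is designed to supply; once it is in place, the reflection symmetry at $\tfrac12$ is what converts the arithmetic coincidence ``$\tfrac12$ lies in the orbit'' into the failure of consistency, and thus into a relation.
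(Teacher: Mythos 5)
Before anything else, note that the paper you are writing toward contains no proof of this proposition at all: it is imported verbatim from \cite{MR4237486}. So your proposal can only be judged against what a complete argument requires, and by that standard it has a genuine gap, and the gap is circular. The ``itinerary/coloring'' that is supposed to do all the work is never constructed; you explicitly state that building it ``is precisely what the lamination and coloring framework of \cite{MR4237486} is designed to supply.'' Since the statement being proved \emph{is} a proposition of \cite{MR4237486}, deferring the crux to that paper's machinery is not a proof. Nor is the missing step a routine detail: as you concede, no discreteness is available, and in fact for $|\alpha|<4$ a free $G_{\alpha}$ is never discrete, so the orbit $G_{\alpha}\cdot\infty$ is dense in $\RR$; a ``locally constant side-assignment that can only change across $\ZZ+\tfrac12$'' has no content on a dense orbit, and nothing in your text supplies an algebraic substitute. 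The endgame is circular as well: you say the coloring ``is designed to produce'' two \emph{distinct} reduced words $w_1\neq w_2$ with $w_1\cdot\infty=w_2\cdot\infty=\tfrac12$, and then you contradict this by forcing $w_1=w_2$ --- but equality is only a contradiction if distinctness was independently established, and no mechanism for producing $w_2$, let alone certifying $w_2\neq w_1$, is ever given. (What \emph{is} correct and provable in your sketch: in the free case $\Stab_{G_{\alpha}}(\infty)=\left<A\right>$ and $\Stab_{G_{\alpha}}(0)=\left<B_{\alpha}\right>$, since point stabilizers in $\SL2(\RR)$ are solvable, solvable subgroups of free groups are cyclic, and the generators are primitive.)

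A second concrete error is the claim that ``it suffices to treat one case.'' The reflection $\rho(z)=1-z$ normalizes $G_{\alpha}$ and fixes $\infty$, but precisely for that reason it preserves each orbit separately: $\rho(G_{\alpha}\cdot\infty)=G_{\alpha}\cdot\infty$ and $\rho(G_{\alpha}\cdot 0)=G_{\alpha}\cdot 1=G_{\alpha}\cdot 0$. It gives no passage from $g\cdot\infty=\tfrac12$ to $g'\cdot 0=\tfrac12$; and the element that does exchange the cusps (conjugating $\left<A\right>$ to $\left<B_{\alpha}\right>$) sends $\tfrac12$ to $-2/\alpha$, so it does not identify the two hypotheses either. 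Indeed, carried out honestly, the two cases end differently. The computation your proposal never does is that $\rho$ normalizes $G_{\alpha}$: $\rho A\rho^{-1}=A^{-1}$ and $\rho B_{\alpha}\rho^{-1}=AB_{\alpha}^{-1}A^{-1}$, inducing the involutive automorphism $\phi(A)=A^{-1}$, $\phi(B_{\alpha})=AB_{\alpha}^{-1}A^{-1}$ of $G_{\alpha}$. If $g\cdot 0=\tfrac12$, then $gB_{\alpha}g^{-1}$ is parabolic with fixed point $\tfrac12$, and conjugating a parabolic by a reflection fixing its fixed point inverts it, so $\bigl(\phi(g)A\bigr)B_{\alpha}^{-1}\bigl(\phi(g)A\bigr)^{-1}=gB_{\alpha}^{-1}g^{-1}$ holds in $\SL2(\RR)$; if $G_{\alpha}$ were free, comparing centralizers forces $\phi(g)A=gB_{\alpha}^{k}$, which is already impossible in the abelianization, since $\phi$ abelianizes to $-\mathrm{id}$ and the $A$-exponent would have to satisfy $2g_A=1$. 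If instead $g\cdot\infty=\tfrac12$, the same reasoning only forces $\phi(g)=gA^{m}$, an equation that \emph{does} have solutions --- exactly $g\in\left<A\right>$, as one sees by comparing syllable normal forms after writing $\phi$ as conjugation by $A$ composed with inversion of both generators --- and the contradiction is then geometric, because $g\in\left<A\right>$ fixes $\infty$ and cannot send it to $\tfrac12$. So your reflection at $\tfrac12$ is the right object, and it can be completed into an elementary proof along these lines; but as written, the proposal is a programme whose decisive steps (the normalization computation, a genuine replacement for the coloring, and the separate treatment of the two cases) are absent.
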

For the detailed account, such as the action of $G_{\alpha}$ on $S^1 \cong \RR \cup \{ \infty \}$, we refer to Subsection~\ref{Pre_Orbit-Test} and \cite{MR4237486}.
Using this tool, many rational numbers, e.g., $$ \frac{41}{18}, \frac{57}{25}, \frac{59}{26}, \frac{43}{27},$$ are known to be non-free. See Table 1 in \cite{MR4237486}.
This naturally raises the question of whether the converse of the orbit test holds.
The first main result of this paper shows that the converse of the orbit test is false.
Moreover, the counterexamples turn out to be rational.

\begin{thm}[Corollary \ref{Converse_OT2}]
 The converse of the orbit test is false.
 Moreover, there are infinitely many non-free rational numbers that never satisfy the orbit test.
\end{thm}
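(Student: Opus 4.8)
The plan is to reduce the statement to producing an infinite family of rationals $\alpha=q/p$ (in lowest terms) that are simultaneously (i) non-free and (ii) fail the orbit test, i.e. satisfy $1/2\notin \Orb(\{0,\infty\})$, where $\Orb(\{0,\infty\})$ denotes the $G_\alpha$-orbit of the two parabolic fixed points $0,\infty$ in $\mathbb{P}^1(\QQ)$. For (i) I would not reprove anything: by the cited theorem of S.~Kim and T.~Koberda (Theorem~1.4 of \cite{kim2022non}), every rational $q/p$ with $|q|\le 27$, $q\neq 24$, and $|q/p|<4$ is non-free. Hence, fixing a small numerator $q$ and letting the denominator $p$ run over a suitable arithmetic progression already produces infinitely many non-free numbers with $|q/p|<4$. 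The entire difficulty is therefore concentrated in (ii): certifying, for infinitely many members of such a family at once, that $1/2$ is genuinely unreachable from $\{0,\infty\}$.

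The tool for (ii) is the modulo homomorphism. Since $A\in\SL2(\ZZ)$ and $B_\alpha\in\SL2(\ZZ[1/p])$, we have $G_\alpha<\SL2(\ZZ[1/p])$, and I want a $G_\alpha$-invariant set $S\subseteq\mathbb{P}^1(\QQ)$ with $0,\infty\in S$ but $1/2\notin S$. The first thing to record is what does \emph{not} work: a single reduction $\mathbb{P}^1(\QQ)\to\mathbb{P}^1(\mathbb{F}_\ell)$ can never separate $1/2$ from the orbit, because for odd $\ell$ the element $A$ acts as $z\mapsto z+1$ and already carries $0$ through every residue of $\mathbb{F}_\ell$, while for $\ell=2$ one has $1/2\equiv\infty$; in both cases the reduced orbit is all of $\mathbb{P}^1(\mathbb{F}_\ell)$. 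The invariant must instead see denominators, which is exactly what the $p$-adic picture supplies. Viewing $G_\alpha<\SL2(\QQ_p)$ for a prime $p$ dividing the denominator, $A$ lies in the vertex stabiliser $\SL2(\ZZ_p)$ whereas $B_\alpha$ has a pole and translates along the Bruhat–Tits tree; the reduction $\SL2(\ZZ_p)\to\SL2(\mathbb{F}_p)$ is the relevant modulo homomorphism, and it governs, level by level in the tree, which ends are reachable from the $A$- and $B_\alpha$-fixed ends $\infty$ and $0$.

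Concretely I would proceed as follows. First, encode a point $x/y\in\mathbb{P}^1(\QQ)$ by its $p$-adic data (the geodesic from the base vertex together with the residues read off by the modulo homomorphism along it) and compute the action of the generators on this data: $A$ acts through its image $\bar A\in\SL2(\mathbb{F}_p)$ on the link of the fixed vertex, while $B_\alpha$ pushes deeper into the tree. Second, isolate a combinatorial region $S$, a union of branches stable under both generators and containing the ends $0,\infty$. Third, locate $1/2$ in this model and check that its position violates the defining condition of $S$. Finally, choose the progression of denominators so that the residue of $\alpha$ controlling the modulo homomorphism is constant along the family; then the \emph{same} $S$ works for every member, yielding infinitely many counterexamples simultaneously, and Corollary~\ref{Converse_OT2} follows once we confirm each member lies in the Kim–Koberda non-free range.

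The main obstacle is the conjunction of the last two steps: I must design $S$ so that it is stable under \emph{both} $A$ (which mixes residues on the link) and $B_\alpha$ (which changes the tree level), so that $1/2$ provably escapes it, \emph{and} so that this persists across an entire congruence class of denominators rather than for one lucky $\alpha$. Stability under $A$ is the delicate point, since $A$ acts transitively on the residues modulo any odd prime, so $S$ cannot be cut out by a naive congruence on the numerator or denominator and must be phrased purely through the tree geometry. I expect the bulk of the work, and the place where the modulo homomorphism earns its keep, to be the verification of this joint stability while keeping the whole family inside the non-free range.
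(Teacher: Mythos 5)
Your reduction of the statement is the right one, and your step (i) coincides with the paper's: Kim--Koberda's theorem supplies the non-freeness, so everything hinges on certifying that infinitely many such rationals fail the orbit test. But that certification is never actually carried out in your proposal: you lay out a Bruhat--Tits programme, identify its crucial verification (joint stability of the set $S$ under $A$ and $B_\alpha$, uniformly in the family) as the ``main obstacle,'' and stop there. Moreover, the programme aims at the wrong modulus. Your tree picture and the reduction $\SL2(\ZZ_p)\to\SL2(\mathbb{F}_p)$ see only the denominator prime $p$, whereas the obstruction to reaching $\frac{1}{2}$ is an arithmetic condition on the \emph{numerator} $q$: for instance $\frac{4}{p}$ must fail the test, while $\frac{41}{18}$ satisfies it (Table 1 of \cite{MR4237486}), and numbers of these two kinds are structurally indistinguishable in a purely $p$-adic framework. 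No invariant built from reduction mod $p$ can separate them, so the plan as stated would not close.

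The step you dismissed too quickly is exactly the one that works. Your no-go argument for ``a single reduction'' only treats prime moduli $\ell$ and tacitly assumes the reduced orbit fills $\mathbb{P}^1(\mathbb{F}_\ell)$; both restrictions fail for the modulus the paper uses, namely the composite number $q$ itself. Since $\alpha=\frac{q}{p}=q\cdot\frac{1}{p}\in q\ZZ\left[\frac{1}{p}\right]$, the generator $B_\alpha$ becomes the \emph{identity} under $\pi_{\frac{q}{p}}$, so the image of all of $G_{\frac{q}{p}}$ is merely the cyclic unipotent group $\left\{\begin{bmatrix} 1 & k \\ 0 & 1\end{bmatrix}\right\}$ --- the reduced orbit is tiny, not all of $\mathbb{P}^1(\ZZ/q\ZZ)$. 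Writing $g=\begin{bmatrix} a & b \\ c & d \end{bmatrix}\in G_{\frac{q}{p}}$, so that $a\equiv 1,\ b\equiv k,\ c\equiv 0,\ d\equiv 1 \pmod{q\ZZ[\frac{1}{p}]}$, the equation $g\cdot 0=\frac{b}{d}=\frac{1}{2}$ forces $q\mid(2k-1)$, hence $q$ odd, and $g\cdot\infty=\frac{a}{c}=\frac{1}{2}$ forces $q\mid 2$; this is Proposition \ref{Converse_OT1}, and for even $|q|\geq 4$ it even yields the invariant set you wanted, since $[1:2]$ equals neither $[k:1]$ nor $[1:0]$ in $\mathbb{P}^1(\ZZ/q\ZZ)$ when $q$ is even. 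Consequently every $\frac{q}{p}$ with even numerator $|q|\geq 4$ fails the orbit test, and taking $|q|\in\{4,6,\dots,26\}$, $|q|\neq 24$, with infinitely many coprime denominators $p$ satisfying $\left|\frac{q}{p}\right|<4$, Kim--Koberda makes all of them non-free. So the fix for your proposal is not the tree: it is to allow the composite modulus $q$, under which the second generator dies and the whole question becomes elementary congruence arithmetic.
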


A key observation from the previous result is that the numerator should be $\pm 2$ or odd (Proposition \ref{Converse_OT1}).
Consequently, although such counterexamples exist, the orbit test remains useful to show that $\alpha$ is non-free rational when the numerator is a large odd integer.
We point out that when $\alpha$ approaches to $4$, the length of all relations in $G_{\alpha}$ diverges to $\infty$ \cite{gutan2014diophantine}.
In Remark \ref{Compare_syllable_length}, we discuss the syllable length of a relation arising from the orbit test.
Thus, for rational numbers with a sufficiently large numerator, the orbit test becomes more effective. 
We also refer to \cite{yaari2025long} for further discussion of relations in $G_{\alpha}$.

In \cite{beardon1993pell}, A. F. Beardon revealed an interesting relationship between Pell's equation and non-free rational numbers.
This result was later extended in \cite{tan1996quadratic}. Both papers provide classes of convergent sequences of non-free rational numbers.
Smilga constructed sequences of non-free rational numbers converging to several real numbers in \cite{smilga2021new}. The list of limit points includes $2+\sqrt{2}$, which is currently the largest known limit point of non-free rational numbers.
Generalizing Smilga's work, further families of sequences of non-free rational numbers were obtained in \cite{buyalos2025family}.
Despite these contributions, it remains unknown whether there exists a sequence of non-free rational numbers converging to $3$ or $4$ (see \cite{choi2025non}).
In this paper, we address the case of $3$.

\begin{thm}[Corollary \ref{LimPtOfRRN3}] \label{LimPtOfRRN3-intro}
$3$ is a limit point of non-free rational numbers.
\end{thm}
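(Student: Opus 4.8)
The plan is to derive the corollary from the two explicit sequences announced in the abstract, so the whole task reduces to proving that every member of
$$\alpha_n = 3 + \frac{3}{2(9n-1)} \qquad \text{and} \qquad \alpha_n' = 3 + \frac{9n+5}{3(2n+1)(9n+4)}$$
is a non-free rational number, since $\alpha_n, \alpha_n' \to 3$ is immediate. To establish non-freeness I would invoke the orbit test: it suffices to produce, for every $n$, an element $g_n \in G_{\alpha_n}$ with $g_n \cdot \infty = \tfrac12$ (or $g_n \cdot 0 = \tfrac12$). Here I would use the explicit Möbius actions $A\colon z \mapsto z+1$ and $B_\alpha\colon z \mapsto \frac{z}{\alpha z + 1}$, so that tracking the orbit of $\infty$ amounts to computing a finite product of $A^{\pm 1}, B_\alpha^{\pm 1}$ and reading off the ratio of the first column: if $g_n = \left(\begin{smallmatrix} a & b \\ c & d \end{smallmatrix}\right)$, the condition $g_n \cdot \infty = \tfrac12$ is the single relation $2a = c$ on the entries.

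The first, and most creative, step is to guess the right word family. Because $A B_\alpha^{-1} = \left(\begin{smallmatrix} 1-\alpha & 1 \\ -\alpha & 1 \end{smallmatrix}\right)$ has trace $2-\alpha$, which equals $-1$ exactly at $\alpha = 3$, this word is an elliptic element of order three at $\alpha = 3$ and is ``almost of order three'' for $\alpha$ near $3$; I expect the value $3$ to be governed by this behaviour. I would therefore look for $g_n$ containing a block built from $A B_\alpha^{-1}$ (or a closely related elliptic word) raised to a power growing linearly in $n$, glued to a fixed prefix and suffix chosen so that the orbit of $\infty$ lands on $\tfrac12$. Computing $g_n \cdot \infty$ then reduces to evaluating the entries of the $n$-th power of a fixed matrix; these satisfy a second-order linear recurrence and are Chebyshev polynomials in the trace, which is precisely the mechanism linking the problem to the generalized Chebyshev polynomials cited in the introduction.

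With the word family fixed, imposing $g_n \cdot \infty = \tfrac12$ becomes a Diophantine equation relating $n$ to the numerator and denominator of $\alpha_n$. The key step is to recognize this equation as a \emph{modified Pell equation} $x^2 - D y^2 = N$, whose solutions fall into finitely many families, each growing geometrically. I would solve for $\alpha$ along each family, check that the resulting rationals are exactly the two sequences in the statement, and verify that the correction terms $\alpha_n - 3$ and $\alpha_n' - 3$ tend to $0$; the two sequences should correspond to two branches (or residue classes) of Pell solutions. The orbit identity $g_n \cdot \infty = \tfrac12$ would then be confirmed for all $n$ by induction using the matrix recurrence together with the Pell relation to control the entries, respecting throughout the parity constraint (numerator $\pm 2$ or odd) required for the orbit test to apply.

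The main obstacle I anticipate is the initial design step: finding a word family whose orbit condition collapses to a genuine Pell-type equation whose limit is \emph{exactly} $3$ rather than some other accumulation point, and then proving the resulting identity cleanly for every $n$. Once the modified Pell equation is correctly identified, producing infinitely many solutions and extracting the asymptotics is routine; the delicate part is the bookkeeping in the matrix product that certifies $g_n \cdot \infty = \tfrac12$ uniformly in $n$, and matching the two solution branches to the two explicit formulas in the statement.
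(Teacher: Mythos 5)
Your overall strategy---certify non-freeness of the two explicit sequences by the orbit test, with a Pell-type equation producing the rational solutions---is the same route the paper takes in Theorem \ref{ExplicitSeqsOfRRNConvTo3}; note that the paper's actual proof of Corollary \ref{LimPtOfRRN3} is even softer, just substituting $(n_3,n_4,n_5)=(-1,1,0)$ into Theorem \ref{LimPtOfRRN2}, whose double-limit argument needs no explicit sequences at all. However, your proposal contains one step that provably fails. You set up the test as $g_n \cdot \infty = \tfrac{1}{2}$, i.e.\ the relation $2a=c$ on the first column. By the paper's Proposition \ref{Converse_OT1}, if $g \cdot \infty = \tfrac{1}{2}$ for some $g \in G_{q/p}$ then $q \mid 2$; the numerators of your $\alpha_n$ and $\alpha_n'$ are $3(18n-1)$ and $162n^2+162n+41$, which are odd and larger than $2$, so no such $g_n$ exists. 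Any witness must satisfy $g_n \cdot 0 = \tfrac{1}{2}$; your parenthetical hedge keeps the statement true, but the branch you actually develop is empty.

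The more serious issue is that the decisive construction is missing: everything hinges on exhibiting a concrete word family and verifying the orbit identity, and you defer exactly this as the ``main obstacle.'' Your proposed shape---a fixed prefix and suffix around a growing power $(AB_\alpha^{-1})^{m}$, analyzed via Chebyshev recurrences---is a reasonable heuristic (the trace $2-\alpha$ does equal $-1$ at $\alpha=3$), but it is not what the paper does, and you give no evidence that it closes. The paper's witnesses have \emph{bounded} syllable length with two growing exponents, $g = B_\alpha A^{-1} B_\alpha^{y} A^{x}$; the condition $g \cdot 0 = \tfrac{1}{2}$ becomes a quadratic in $\alpha$ whose discriminant, arranged in $x$, is $(9y^2-2y+1)x^2+(6y-2)x+1$. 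Lemma \ref{EqnSolvingLem1} shows the leading coefficient is a non-square positive integer for every nonzero $y$, the Pell-type Lemma \ref{PellLem1} then guarantees infinitely many $x$ making this a perfect square, and the two explicit sequences arise from the explicit choices $x=-\tfrac{3}{2}(9y-2)$ for $y$ even and $x=-\tfrac{3}{4}(9y-1)(9y+1)$ for $y$ odd, after which $\alpha_\pm$ is computed in closed form. That algebra---not the limit computation, which is indeed immediate---is the substance of the result, and it is absent from your proposal.
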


\noindent Indeed, we prove that any rational number of the form $$ -\frac{2 n_3 + 2 n_5 - 1}{n_3 n_4 (2 n_5 - 1)} \quad (n_3, n_4 \in \ZZ - \{ 0 \}, \ n_5 \in \ZZ),$$
turns out to be a limit point of limit points of non-free rational numbers.
This result reveals that many rational numbers arise as limit points of limit points of non-free rational numbers.
Moreover, we find that the rational numbers
$$ 3 + \frac{3}{2 (9 n - 1)} \quad \textnormal{and} \quad
 3 + \frac{9 n + 5}{3 (2 n + 1) (9 n + 4)}$$
are non-free for all $n \in \ZZ$.
See Theorem \ref{LimPtOfRRN2} and Theorem \ref{ExplicitSeqsOfRRNConvTo3}, respectively.

After the initial submission of this manuscript to the arXiv,
we became aware that an equivalent result to Theorem~\ref{LimPtOfRRN3-intro} was independently obtained by Ilia Smilga in \cite{buyalos2025family}.
While the statements are equivalent, the proofs rely on different techniques: we use the orbit test, whereas their approach is based on half-relation polynomials.
In particular, our sequences are different from theirs.


The paper is organized as follows. In Section~\ref{Sec-Pre}, we briefly recall the orbit test, the corresponding principal congruence subgroup, and Pell's equation.
We use the orbit test and the corresponding modulo homomorphism to prove the first main result, Corollary \ref{Converse_OT2}, in Section~\ref{Sec-Cong&pi}.
Using a similar technique, we also characterize which $G_\alpha$ contains special types of matrices in this section.
In Section~\ref{Sec-Seq to 3}, we provide rational numbers that are limit points of non-free rational numbers (Theorem \ref{LimPtOfRRN2}).
As a corollary, $3$ turns out to be a limit point of non-free rational numbers  (Corollary \ref{LimPtOfRRN3}). 
In Section~\ref{Sec-ExplicitSeqTo3}, we explicitly construct sequences of non-free rational numbers that converge to $3$.

\vspace{0.6cm} 

\noindent \textbf{Acknowledgement} 
 Both authors are grateful to Hyungryul Baik and Sang-hyun Kim for helpful discussions and suggestions for qualitative improvement.
 We also thank Dongha Lee for helpful comments on an earlier draft of this manuscript.
 The first author has been supported by the National Research Foundation of Korea (NRF) grant funded by the Korea government (MSIT) RS-2025-00513595.
 The second author has been supported by $2025$ Summer-Fall KAIST Undergraduate Research Participation Program.


\section{Preliminaries} \label{Sec-Pre}
\subsection{The orbit test} \label{Pre_Orbit-Test}
In \cite{MR4237486}, the orbit test is established.
In this section, we briefly recall the orbit test and appeal to its usefulness.
Throughout this paper, we assume that $\alpha$ is real; hence $G_{\alpha}$ is a subgroup of $\SL2 (\RR)$.
The group $\SL2 (\RR)$ acts on the hyperbolic plane $\HH^2$, and this action induces an action on the circle at infinity $\partial \HH^2 \cong S^1 \cong \RR \cup \{ \infty \}$.
Consequently, $G_{\alpha}$ acts on $\RR \cup \{ \infty \}$, and a precise description of this action is given by the following formula:

 $$ \begin{bmatrix} a & b \\ c & d \end{bmatrix} \cdot r := \frac{ar+b}{cr+d}. $$

\noindent The following proposition is called the orbit test.
 
\begin{prop} [Proposition 5.4, \cite{MR4237486}] \label{Orbit_Test}
Let $\alpha \in \RR$.
 If there exists $g \in G_{\alpha}$ such that $$g \cdot 0 = \frac{1}{2} \textnormal{ or } g \cdot \infty = \frac{1}{2},$$
 then $\alpha$ is non-free.
\end{prop}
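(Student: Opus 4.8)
The plan is to prove the contrapositive: assuming that $G_{\alpha}$ is freely generated by $A$ and $B_{\alpha}$, I would show that $\tfrac12$ lies in neither orbit $G_{\alpha}\cdot 0$ nor $G_{\alpha}\cdot\infty$. The first move is to reformulate the hypothesis in terms of matrix columns. Writing $g=\begin{bmatrix} a & b\\ c& d\end{bmatrix}$, the action formula gives $g\cdot\infty=a/c$ and $g\cdot 0=b/d$, so $g\cdot\infty=\tfrac12$ means the first column of $g$ is proportional to $(1,2)^{\mathsf T}$, and $g\cdot 0=\tfrac12$ means the second column is. Since $A$ fixes $\infty$ and $B_{\alpha}$ fixes $0$, right multiplication by $\langle A\rangle$ (resp.\ $\langle B_{\alpha}\rangle$) does not change $g\cdot\infty$ (resp.\ $g\cdot 0$); thus the two cases of the orbit test really concern the cosets $g\langle A\rangle$ and $g\langle B_{\alpha}\rangle$.

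The structural backbone I would use is a stabilizer lemma valid in the free case: if $G_{\alpha}$ is free then $\Stab_{G_{\alpha}}(\infty)=\langle A\rangle$ and $\Stab_{G_{\alpha}}(0)=\langle B_{\alpha}\rangle$. This is proved by a normal-form (ping-pong) argument in the free product $\langle A\rangle * \langle B_{\alpha}\rangle$: a reduced word that is not a power of $A$ has its leftmost and rightmost syllables control the image of $\infty$, forcing $g\cdot\infty\neq\infty$, and symmetrically for $0$. Granting this, my goal reduces to exhibiting, from the hypothesis $g\cdot\infty=\tfrac12$ (resp.\ $g\cdot 0=\tfrac12$), a nonempty reduced word that fixes $\infty$ (resp.\ $0$) yet is not a power of $A$ (resp.\ $B_{\alpha}$): such a word contradicts the stabilizer lemma and hence contradicts freeness, so $\alpha$ must be non-free.

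To build that word I would exploit the special position of $\tfrac12$ relative to the translation $A$. Because $A$ acts by $r\mapsto r+1$, the points $\ZZ+\tfrac12$ are precisely the midpoints between consecutive points of the $A$-orbit $\ZZ$ of $0$, and $\tfrac12$ is exactly the critical point at which the Sanov-type ping-pong partition of $\RR\cup\{\infty\}$ degenerates (that partition being strict precisely when $|\alpha|\ge 4$). The governing symmetry is the involution $\sigma=\begin{bmatrix} 1&0\\ 2&-1\end{bmatrix}$, which fixes $0$, swaps $\infty\leftrightarrow\tfrac12$, and satisfies $\sigma B_{\alpha}\sigma=B_{\alpha}^{-1}$; it lets me transport $g\cdot\infty=\tfrac12$ into the condition that $\sigma g$ fixes $\infty$, and to pass between the $\infty$-case and the $0$-case. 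The concrete output of this step is an elliptic relator: for instance, when $\alpha=2$ one finds $B_{\alpha}A^{-1}B_{\alpha}=-A$, i.e.\ $(B_{\alpha}A^{-1})^{2}=-I$, a nontrivial relation witnessing non-freeness, and I would aim to produce the analogous reduced word fixing $\infty$ or $0$ from a general $g$ that lands on $\tfrac12$.

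The hard part will be exactly this last construction carried out uniformly in $\alpha\in\RR$ and in the word length of $g$: since for $|\alpha|<4$ the group may be free or non-free, no fixed ping-pong partition can exclude $\tfrac12$, so the contradiction must come from a careful bookkeeping of the normal form of $g$ (equivalently, a continued-fraction expansion of the orbit point, or a loop in the Bass--Serre tree of $\langle A\rangle * \langle B_{\alpha}\rangle$) showing that the equality $g\cdot\infty=\tfrac12$ forces two distinct normal forms to represent the same boundary point. Managing this combinatorics, rather than the algebra of any single relator, is the crux; the involution $\sigma$ and the column reformulation are the tools I would use to keep it under control, and the $\infty$-case and the $0$-case would be handled symmetrically through $\sigma$.
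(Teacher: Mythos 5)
Note first that this paper does not prove the proposition at all: it is quoted from \cite{MR4237486}, so there is no in-text proof to compare against, and your proposal has to stand on its own. As it stands it does not. Your skeleton (contrapositive plus rigidity of the point stabilizers in the free case) is a viable one, but it has three gaps, two of them fatal. (i) Your justification of the stabilizer lemma is invalid: you argue by ping-pong (``leftmost and rightmost syllables control the image of $\infty$''), but for $|\alpha|<4$ --- the only regime where the orbit test has content --- no ping-pong partition exists, as you yourself concede later. The lemma itself is true, but needs a different proof: $\Stab_{G_\alpha}(\infty)$ is exactly the set of upper-triangular matrices in $G_\alpha$, hence a solvable subgroup of a free group, hence cyclic (Nielsen--Schreier, plus the fact that free groups of rank $\geq 2$ are not solvable); since it contains the basis element $A$, whose centralizer in $G_\alpha$ is $\langle A\rangle$ and which is not a proper power, it equals $\langle A\rangle$, and likewise $\Stab_{G_\alpha}(0)=\langle B_\alpha\rangle$. (ii) Your transport involution $\sigma=\begin{bmatrix}1&0\\2&-1\end{bmatrix}$ does not normalize $G_\alpha$: while $\sigma B_\alpha\sigma=B_\alpha^{-1}$, one computes $\sigma A\sigma=\begin{bmatrix}3&-1\\4&-1\end{bmatrix}$, the parabolic fixing $\tfrac12$, which in general lies outside $G_\alpha$. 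Hence $\sigma g$ is not an element of $G_\alpha$, and the statement ``$\sigma g$ fixes $\infty$'' cannot be fed into a stabilizer lemma about $G_\alpha$; the transport step collapses. (iii) The actual derivation of a contradiction --- what you yourself call the crux --- is never carried out; ``bookkeeping of normal forms / continued fractions / Bass--Serre'' is a plan, not an argument, and it is precisely the content of the proposition.

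For what it is worth, your framework can be completed, but with the reflection $R=\begin{bmatrix}-1&1\\0&1\end{bmatrix}$ (i.e.\ $x\mapsto 1-x$) in place of $\sigma$, because $R$ \emph{does} normalize $G_\alpha$: $RAR=A^{-1}$ and $RB_\alpha R=AB_\alpha^{-1}A^{-1}$. Write $\rho$ for the induced automorphism of $G_\alpha$ and $[g]=(g_A,g_B)\in\ZZ^2$ for the abelianization; note $\rho_*=-\mathrm{id}$. If $g\cdot 0=\tfrac12$, then $\rho(g)\cdot 1=Rg\cdot 0=\tfrac12$, so $g^{-1}\rho(g)A\in\Stab_{G_\alpha}(0)=\langle B_\alpha\rangle$, i.e.\ $g^{-1}\rho(g)=B_\alpha^{k}A^{-1}$; abelianizing gives $-2[g]=(-1,k)$, and the first coordinate $-2g_A=-1$ is a parity contradiction. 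If $g\cdot\infty=\tfrac12$, the same reasoning gives $g^{-1}\rho(g)=A^{k}$ with $k=-2g_A$; then $h:=gA^{-g_A}$ satisfies $\rho(h)=h$, so $h$ commutes with $R$ and therefore permutes $\Fix(R)=\{\tfrac12,\infty\}$; since $h\cdot\infty=\tfrac12$, injectivity forces $h\cdot\tfrac12=\infty$, so $h^2$ fixes both points, whence $h^2=A^n$ with $A^n\cdot\tfrac12=\tfrac12$, so $h^2=I$ with $h\neq I$, contradicting torsion-freeness of a free group. These two closing arguments, the normalizing symmetry $R$, and the solvability proof of the stabilizer lemma are exactly the ingredients missing from your proposal.
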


\begin{rmk} \label{Compare_syllable_length}
Several tests for non-freeness have been established. One commonly used technique is to find a matrix of the form $\begin{bmatrix} * & 0 \\ * & *
\end{bmatrix}$, 
other than $B_{\alpha}^n$,
or equivalently, in terms of the above action, an element $g \in G_\alpha-\left< B_{\alpha} \right>$ such that $g \cdot 0=0$.
In this case, we recover a relation of $G_\alpha$ from $g \cdot 0 = 0$ as follows.
Suppose $g=\begin{bmatrix} * & 0 \\ * & *
\end{bmatrix}$, then since $g \not \in \left< B_{\alpha} \right>$, we get $[gB_{\alpha}g^{-1},B_{\alpha}]=1.$
Note that the syllable length of the relation $[gB_{\alpha}g^{-1},B_{\alpha}]$ is approximately four times the syllable length of $g$.

 Now consider the orbit test so choose $g \in G_{\alpha}$ with $g \cdot 0 = \frac{1}{2}$. From this, we can recover an element of the form $\begin{bmatrix} * & 0 \\ * & *
\end{bmatrix}$ as follows.
Say $g=A^{n_1}B_{\alpha}^{n_2} \cdots B_{\alpha}^{n_{2k}}A^{n_{2k+1}}$. Then one can compute that
$$ L:= 
A^{n_{2k+1}} B_{\alpha}^{n_{2k}} \cdots B_{\alpha}^{n_2} A^{n_1} \
A^{-1} \
A^{n_1}B_{\alpha}^{n_2} \cdots B_{\alpha}^{n_{2k}}A^{n_{2k+1}}
=\begin{bmatrix} * & 0 \\ * & *
\end{bmatrix},$$
hence we obtain a relation $[LB_{\alpha}L^{-1},B_{\alpha}]$.
We point out that the syllable length of the relation is approximately eight times the syllable length of $g$.
Therefore, the orbit test becomes more useful to find relations with large syllable length.
As mentioned in the Introduction, this test becomes more effective for rational numbers with a sufficiently large numerator.
\end{rmk}

\subsection{The corresponding modulo homomorphisms}
 Now we assume that $\alpha$ is rational so put $\alpha = \frac{q}{p}$ where $p$ and $q$ are integers with $\gcd(p,q)=1$. 
 Recall that for fixed prime $r$, $G_{\frac{1}{r}} = \SL2 \left( \ZZ \left[ \frac{1}{r} \right] \right)$ so $G_{\frac{q}{r}} < \SL2 \left ( \ZZ \left[ \frac{1}{r} \right] \right )$ \cite[Proposition 2.1]{nyberg2023congruence}.
 Lemma \ref{G1OverpEqualSL2Z1Overp} tells us that this result can be extended to any nonzero integer $p$.

In \cite[p.204]{mennicke1967ihara}, it is proved that for prime $r$, $\SL2 \left( \ZZ \left[ \frac{1}{r} \right] \right)$ is generated by
 $$\begin{bmatrix} 1 & 0 \\ 1 & 1
 \end{bmatrix},\begin{bmatrix} 0 & 1 \\ -1 & 0
 \end{bmatrix}, \textnormal{ and } \begin{bmatrix} r & 0 \\ 0 & \frac{1}{r}
 \end{bmatrix}.$$
 Motivated by this result, we first provide the following generating set of $\SL2 \left( \ZZ \left[ \frac{1}{p} \right] \right)$:
 
\begin{lem}
\label{GeneratingSetOfSL2Z1Overp}
For any nonzero $p \in \ZZ$, $\SL2 \left( \ZZ \left[ \frac{1}{p} \right] \right)$ is generated by
$$ B_1 := \begin{bmatrix} 1 & 0 \\ 1 & 1
 \end{bmatrix}, \ R:= \begin{bmatrix} 0 & 1 \\ -1 & 0
 \end{bmatrix}, \ U_1 := \begin{bmatrix} r_1 & 0 \\ 0 & \frac{1}{r_1}
 \end{bmatrix}, \cdots , \ U_k := \begin{bmatrix} r_k & 0 \\ 0 & \frac{1}{r_k}
 \end{bmatrix},$$
 where $p = \pm r_1^{e_1} \cdots r_k^{e_k}$, $r_1, \cdots, r_k$ are distinct primes, and the exponents $e_i$ are positive integers.
\end{lem}

\begin{proof}
Choose a matrix $$X = \begin{bmatrix} x_1 & x_2 \\ x_3 & x_4
\end{bmatrix}\in \SL2 \left( \ZZ \left[ \frac{1}{p} \right] \right).$$
Then for any $P \in \ZZ$, $$ U_i^{P} X = \begin{bmatrix} x_1 r_i ^{P} & x_2 r_i ^{P} \\ x_3  r_i ^{-P} & x_4 r_i ^{-P}
\end{bmatrix}, $$
so for some $i_1,\cdots,i_t$, the matrix $X_1:=U_{i_1}^{P_1} \cdots U_{i_t}^{P_t} X$ has integer entries on the first row. Furthermore, we can make the entries on the first row of $X_1$ to be coprime by multiplying more $U_i$ factors. Suppose that the greatest common divisor $d$ of two entries on the first row of $X_1$ is greater than 1. Then we can express $X_1$ as 
$$ X_1 = \begin{bmatrix} da & db \\ u_1 & u_2
\end{bmatrix},$$ where $d,a,b \in \ZZ$ with $\gcd(a,b)=1$ and $u_1,u_2 \in \ZZ\left[ \frac{1}{p} \right]$. Since $\det(X_1) = 1$, $\frac{1}{d} = au_2-bu_1 \in \ZZ\left[ \frac{1}{p} \right]$, i.e., $d$ divides some power of $p$.
Hence we may assume that the entries on the first row of $X_1$ are coprime, possibly after left-multiplying $U_{i_1}^{P_1} \cdots U_{i_t}^{P_t} X$ by appropriate negative powers of $U_i$.

Next we obtain a matrix in $\SL2 \left( \ZZ \left[ \frac{1}{p} \right] \right)$, say $X_2$, whose first row is $\begin{bmatrix} \pm 1 & 0 \end{bmatrix}$ by right-multiplying $X_1$ by an appropriate word of $B_1$ and $R$. Indeed, we can apply the Euclidean algorithm on the first row $\begin{bmatrix} a & b \end{bmatrix}$ of $X_1$ to find the appropriate word, since $\gcd(a, b) = 1$ and
$$\begin{bmatrix} a & b \end{bmatrix} B_1 = \begin{bmatrix} a + b & b \end{bmatrix}, \ \begin{bmatrix} a & b \end{bmatrix} R = \begin{bmatrix} -b & a \end{bmatrix}.$$
Since $X_2 \in \SL2 \left( \ZZ \left[ \frac{1}{p} \right] \right)$, we have $X_2 = \begin{bmatrix} \pm 1 & 0 \\ x & \pm 1 \end{bmatrix}$ for some $x \in  \ZZ \left[ \frac{1}{p} \right] $. Then for any $1 \leq i \leq k$ and $P \in \ZZ$,
$$U_i^{-P} X_2 U_i^{P} = \begin{bmatrix} \pm 1 & 0 \\ x r_i^{2P} & \pm 1
\end{bmatrix}.$$
Therefore we obtain a matrix in $\SL2 (\ZZ)$, say $X_3$ by conjugating $X_2$ by suitable powers of $U_i$. Since $\SL2 (\ZZ)$ is generated by $B_1$ and $R$, the original matrix $X$ can be expressed as a word of $B_1, R, U_1, \cdots, U_k$.
\end{proof}

\begin{lem}
\label{G1OverpEqualSL2Z1Overp}
 For any nonzero integer $p$, $G_{\frac{1}{p}} = \SL2 \left( \ZZ \left[ \frac{1}{p} \right] \right)$.
\end{lem}
\begin{proof}
 Clearly, $G_{\frac{1}{p}}$ is a subgroup of $\SL2 \left( \ZZ \left[ \frac{1}{p} \right] \right)$.
 We shall prove the reverse inclusion $\SL2 \left( \ZZ \left[ \frac{1}{p} \right] \right) \subset G_{\frac{1}{p}}$. By Lemma \ref{GeneratingSetOfSL2Z1Overp}, it suffices to show that $G_{\frac{1}{p}}$ contains all of the following matrices:
$$ \begin{bmatrix} 1 & 0 \\ 1 & 1
 \end{bmatrix},\begin{bmatrix} 0 & 1 \\ -1 & 0
 \end{bmatrix},\begin{bmatrix} r_1 & 0 \\ 0 & \frac{1}{r_1}
 \end{bmatrix}, \cdots , \textnormal{ and } \begin{bmatrix} r_k & 0 \\ 0 & \frac{1}{r_k}
 \end{bmatrix}.$$
 The first two matrices are obviously elements of $G_{\frac{1}{n}}$.
Note that $r_i$ is prime. From \cite[p.204]{mennicke1967ihara}, we obtain 
$$ \begin{bmatrix} r_i & 0 \\ 0 & \frac{1}{r_i}
 \end{bmatrix} \in G_{\frac{1}{r_i}} \subset G_{\frac{1}{p}},$$
 for any $1 \leq i \leq k$.
 This means $ \SL2 \left( \ZZ \left[ \frac{1}{p} \right] \right) < G_{\frac{1}{p}}$.
 Therefore, we get $G_{\frac{1}{p}} = \SL2 \left( \ZZ \left[ \frac{1}{p} \right] \right)$.
  \end{proof}

\begin{rmk}
\normalfont
 From Lemma \ref{GeneratingSetOfSL2Z1Overp}, we can characterize when $G_\frac{1}{p}$ and $G_\frac{1}{p'}$ are equal as sets. For any $p, p' \in \ZZ - \{ 0 \}$, we have
 $$G_\frac{1}{p} = G_\frac{1}{p'} \iff \rad(p) = \rad(p').$$
 The `if' direction comes from the equality $G_{\frac{1}{p}} = \SL2 \left( \ZZ \left[ \frac{1}{\rad(p)} \right] \right)$.
 Indeed, we obtain
 $$G_{\frac{1}{p}} = \SL2 \left( \ZZ \left[ \frac{1}{p} \right] \right) = \langle B_1, R, U_1, \cdots, U_k \rangle = \SL2 \left( \ZZ \left[ \frac{1}{\rad(p)} \right] \right)$$
 using the generating set of $\SL2 \left( \ZZ \left[ \frac{1}{p} \right] \right)$ provided in Lemma \ref{GeneratingSetOfSL2Z1Overp}. Now suppose $\rad(p) \neq \rad(p')$. Without loss of generality, let $r$ be a prime number such that $r | p$ and $r \nmid p'$. Then we have $\begin{bmatrix} r & 0 \\ 0 & \frac{1}{r}
 \end{bmatrix} \in \SL2 \left( \ZZ \left[ \frac{1}{p} \right] \right) - \SL2 \left( \ZZ \left[ \frac{1}{p'} \right] \right)$. Thus the `only if' direction is also proved.
\end{rmk}
  
We can then consider the following surjective group homomorphism for a coprime pair $(p,q)$:
 $$ \pi : \SL2 \left( \ZZ \left[ \frac{1}{p} \right] \right) \to \SL2 \left( \ZZ \left[ \frac{1}{p} \right] \ \middle/ \ q \ZZ \left[ \frac{1}{p} \right] \right) \cong \SL2 (\ZZ/q \ZZ).$$
 Let $K_{\frac{q}{p}}$ be the kernel of $\pi_{\frac{q}{p}}:=\pi|_{G_{\frac{q}{p}}}.$ Then $K_{\frac{q}{p}}$ is a finite index normal subgroup of $G_{\frac{q}{p}}$ (actually the index is $q$) and is called the \emph{corresponding principal congruence subgroup} of $G_{\frac{q}{p}}$.
 The homomorphism $\pi_{\frac{q}{p}}$ is called the \emph{corresponding modulo homomorphism} of $G_{\frac{q}{p}}$.

\subsection{Pell's equation}
 We dedicate this subsection to introducing Pell's equation and its variation for our purpose.
In elementary number theory, the following fact is well known.

\begin{lem}[Pell's equation] \label{Pell_eqn}
 Let $d$ be a non-square positive integer. Then the following Diophantine equation
 $$ u^2 - dv^2 = 1 $$
 has infinitely many integer solutions $(u, v) \in \ZZ^2$. In particular, the equation has an integer solution that satisfies $v \neq 0$.
\end{lem}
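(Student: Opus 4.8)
The plan is to follow the classical route via Diophantine approximation, producing first a single nontrivial solution and then amplifying it to infinitely many. Since $d$ is a non-square positive integer, $\sqrt{d}$ is irrational, and Dirichlet's approximation theorem gives, for every positive integer $N$, a pair $(u,v) \in \ZZ^2$ with $1 \le v \le N$ and $|v\sqrt{d} - u| < 1/N \le 1/v$. Factoring the norm as $u^2 - dv^2 = (u - v\sqrt{d})(u + v\sqrt{d})$ and bounding $|u + v\sqrt{d}| \le |u - v\sqrt{d}| + 2v\sqrt{d}$, one finds $|u^2 - dv^2| < 1 + 2\sqrt{d}$, a bound independent of $N$. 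No single pair can satisfy $|v\sqrt{d}-u|<1/N$ for all large $N$ (that would force $v\sqrt{d}=u$, impossible by irrationality), so letting $N \to \infty$ produces infinitely many distinct pairs $(u,v)$, which may be taken with $v > 0$, all of whose norms lie in the finite set of integers in the interval $(-(1+2\sqrt{d}),\, 1+2\sqrt{d})$.

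Next I would apply the pigeonhole principle twice. Since the norm takes only finitely many values on this infinite family, some fixed integer $k$ occurs infinitely often; and $k \neq 0$, because $u^2 = dv^2$ with $v > 0$ would contradict the irrationality of $\sqrt{d}$. Restricting to this infinite subfamily and applying pigeonhole to the residues $(u \bmod |k|, \, v \bmod |k|)$, I obtain two distinct pairs $(u_1, v_1) \neq (u_2, v_2)$, both with positive second coordinate, satisfying $u_1^2 - dv_1^2 = u_2^2 - dv_2^2 = k$ together with $u_1 \equiv u_2$ and $v_1 \equiv v_2 \pmod{|k|}$.

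Now I would invoke Brahmagupta's composition law, that is, multiplicativity of the norm $N(u + v\sqrt{d}) = u^2 - dv^2$ on $\ZZ[\sqrt{d}]$. Writing $(u_1 + v_1\sqrt{d})(u_2 - v_2\sqrt{d}) = X + Y\sqrt{d}$ with $X = u_1 u_2 - d v_1 v_2$ and $Y = v_1 u_2 - u_1 v_2$, the congruences give $X \equiv u_1^2 - d v_1^2 \equiv 0$ and $Y \equiv u_1 v_1 - u_1 v_1 = 0 \pmod{|k|}$, so $u_0 := X/k$ and $v_0 := Y/k$ are integers. Their norm is $N(X + Y\sqrt{d})/k^2 = (k\cdot k)/k^2 = 1$, hence $u_0^2 - d v_0^2 = 1$. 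The one point requiring care is that $v_0 \neq 0$: if $Y = 0$ then $(u_1, v_1)$ and $(u_2, v_2)$ are parallel as vectors, and having equal norm forces $(u_2, v_2) = \pm (u_1, v_1)$; the sign $+$ is excluded since the pairs are distinct, and the sign $-$ is excluded since both second coordinates are positive. This yields the desired solution with $v \neq 0$.

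Finally, to pass from one nontrivial solution to infinitely many, I would use multiplicativity again. Replacing $(u_0, v_0)$ by $(|u_0|, |v_0|)$ preserves the equation and ensures $\eta := u_0 + v_0 \sqrt{d} > 1$; since $d$ is a non-square, $\eta$ is irrational, so in particular $\eta \neq \pm 1$. Writing $\eta^n = u_n + v_n\sqrt{d}$, multiplicativity of the norm gives $u_n^2 - d v_n^2 = 1$ for all $n$, and the strict growth $\eta^n \to \infty$ shows the pairs $(u_n, v_n)$ are pairwise distinct, producing infinitely many solutions. The clause $v \neq 0$ is then immediate, since the only solutions with $v = 0$ are $(\pm 1, 0)$. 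The only genuinely nontrivial input is Dirichlet's approximation theorem; everything after it is elementary pigeonhole together with the norm-multiplicativity identity, and the sole subtlety to watch is ruling out the degenerate parallel case so as to guarantee $v_0 \neq 0$.
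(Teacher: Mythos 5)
Your proof is correct, but it is worth noting that the paper itself does not prove this lemma at all: it is stated as a well-known fact of elementary number theory, and the reader is referred to the cited books of Barbeau and Burton. What you have written is, in essence, the standard textbook proof that those references contain: Dirichlet's approximation theorem produces infinitely many pairs $(u,v)$ with $|u^2-dv^2| < 1+2\sqrt{d}$; a first pigeonhole argument extracts a nonzero norm value $k$ attained infinitely often; a second pigeonhole argument on residues modulo $|k|$ yields two distinct pairs that are congruent entrywise; and Brahmagupta composition then divides out $k$ to give a unit $u_0 + v_0\sqrt{d}$ of norm $1$, whose powers give infinitely many solutions. Your handling of the two delicate points is sound: the congruences $X \equiv 0$ and $Y \equiv 0 \pmod{|k|}$ do guarantee that $X/k$ and $Y/k$ are integers, and your exclusion of the degenerate case $Y=0$ (via the parallel-vector argument, using that both pairs have positive second coordinate and equal nonzero norm) is exactly the step that careless write-ups omit. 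So your argument makes the paper's Lemma self-contained where the authors chose to delegate it to the literature; the only trade-off is length, since for the paper's purposes (feeding into Lemma 2.2 and the modified Pell equation of Lemma 2.3) the classical statement with a citation suffices.
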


\noindent We refer to \cite{barbeau2003pell} and \cite{burton2010ebook} for readers who are interested in Pell's equation. We use Pell's equation to prove the following observation.

\begin{lem}
\label{PellLem0}
Let $d$ be a non-square positive integer and $c$ be a nonzero integer. Suppose that $(u, v) = (u_0, v_0) \in \ZZ^2$ is one of the solutions of the following Diophantine equation
$$u^2 - d v^2 = c.$$
Then this Diophantine equation has infinitely many integer solutions $(u, v) \in \ZZ^2$ that satisfy the congruence condition
$$u \equiv u_0, \ v \equiv v_0 \pmod{d}.$$
\end{lem}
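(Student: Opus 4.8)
The plan is to manufacture new solutions from $(u_0, v_0)$ by multiplying inside the quadratic ring $\ZZ[\sqrt{d}]$, whose norm $N(u + v\sqrt{d}) = u^2 - dv^2$ converts the equation into a multiplicative statement. A pair $(u,v)$ solves $u^2 - dv^2 = c$ precisely when $\omega := u + v\sqrt{d}$ satisfies $N(\omega) = c$, while Lemma~\ref{Pell_eqn} supplies a nontrivial solution $(X_1, Y_1)$ of $X^2 - dY^2 = 1$. Setting $\varepsilon := X_1 + Y_1\sqrt{d}$, we have $N(\varepsilon) = 1$ and may take $\varepsilon > 1$. Writing $\omega_0 := u_0 + v_0\sqrt{d}$, each product $\omega_0\varepsilon^n = u_n + v_n\sqrt{d}$ has norm $c \cdot 1^n = c$ by multiplicativity, and so yields a solution $(u_n, v_n)$ of the original equation.

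First I would record the congruence we need. Writing $\varepsilon^n = X_n + Y_n\sqrt{d}$ and expanding,
\[
\omega_0\varepsilon^n = (u_0 X_n + d\, v_0 Y_n) + (u_0 Y_n + v_0 X_n)\sqrt{d}.
\]
Hence it suffices to find infinitely many $n$ with $X_n \equiv 1$ and $Y_n \equiv 0 \pmod{d}$: for such $n$ the explicit factor $d$ annihilates the contribution of $v_0 Y_n$, and one reads off $u_n \equiv u_0$, $v_n \equiv v_0 \pmod{d}$. Equivalently, I want $\varepsilon^n \equiv 1$ in the finite ring $R := \ZZ[\sqrt{d}]\,/\,(d)$, in which $\{1, \sqrt{d}\}$ is a free $\ZZ/d\ZZ$-basis (here I use that $d$ is a non-square, so $\sqrt{d} \notin \QQ$); the single congruence $\varepsilon^n \equiv 1$ in $R$ is then literally the pair of congruences on $(X_n, Y_n)$.

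The crux is producing such exponents. Since $N(\varepsilon) = 1$ we have $\varepsilon\,\overline{\varepsilon} = 1$ in $\ZZ[\sqrt{d}]$, where $\overline{\varepsilon} = X_1 - Y_1\sqrt{d}$; reducing modulo $d$ shows that the image of $\varepsilon$ is a unit of $R$. As $R$ is finite, its unit group $R^{\times}$ is finite, so $\varepsilon$ has some finite multiplicative order $N$ in $R^{\times}$, giving $\varepsilon^{kN} \equiv 1 \pmod{d}$ for every $k \geq 1$. Each $(u_{kN}, v_{kN})$ is therefore a solution satisfying the required congruences. Finally, because $\varepsilon > 1$ and $\omega_0 \neq 0$, the real numbers $\omega_0\varepsilon^{kN}$ are pairwise distinct and unbounded, so these solutions are infinite in number. (The only case with $\omega_0 = 0$ is $c = 0$, where $(u_0,v_0)=(0,0)$ is the unique solution; this degenerate case plays no role in our applications, for which $c \neq 0$.)

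The main obstacle is guaranteeing infinitely many $n$ with $\varepsilon^n \equiv 1 \pmod{d}$, and this is exactly where the finiteness of $R^{\times}$ — equivalently, a pigeonhole argument on the eventually periodic powers of the unit $\varepsilon$ in $R$ — does the real work. It is what converts the analytic abundance of Pell solutions (Lemma~\ref{Pell_eqn}) into the arithmetic periodicity needed to fix the residues of $(u_n, v_n)$ modulo $d$.
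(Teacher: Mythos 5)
Your proof is correct and follows essentially the same route as the paper's: extract a Pell unit $\varepsilon$ with norm $1$, observe that its image in the finite ring $\ZZ[\sqrt{d}]/(d)$ is a unit of some finite order, and multiply $u_0 + v_0\sqrt{d}$ by powers $\varepsilon^{kN}$ to generate infinitely many solutions carrying the prescribed residues. Two minor points in your favor: your verification that $\varepsilon$ is invertible modulo $d$ (read off directly from $\varepsilon\overline{\varepsilon} = 1$) is cleaner than the paper's ideal-theoretic computation with $I = (u_1 + v_1\sqrt{d},\, d)$, and you explicitly flag the degenerate case $c = 0$ (where $(0,0)$ is the unique solution, so the stated conclusion cannot hold), a case the paper's distinctness claim silently overlooks.
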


\begin{proof}
Since $d$ is a non-square positive integer, we can find integers $u_1, v_1 \in \ZZ$ such that $u_1^2 - d v_1^2 = 1$ and $v_1 \neq 0$, by Pell's equation, Lemma \ref{Pell_eqn}. Now we consider the quotient ring $R := \ZZ[\sqrt{d}] / (d)$ and denote the natural quotient map by $q: \ZZ[\sqrt{d}] \to R$. Since $R$ is a finite commutative ring, the group $R^\times$ of units of $R$ is finite. Also, $q(u_1 + v_1 \sqrt{d})$ is a unit of $R$ since $q(u_1 + v_1 \sqrt{d}) q(u_1 - v_1 \sqrt{d}) = q(1)$. Thus $q(u_1 + v_1 \sqrt{d}) \in R^\times$ has a finite order, say $t \in \NN$. 
Now we complete the proof by showing that every pair of integers $(u, v) \in \ZZ^2$ determined by the formula
\begin{align} \label{Generating_InfSol}
u + v \sqrt{d} = (u_0 + v_0 \sqrt{d}) (u_1 + v_1 \sqrt{d})^{t k} \in \RR \quad (k \in \ZZ)
\end{align}
provide distinct integer solutions of the Diophantine equation $u^2 - d v^2 = c$ which satisfy the congruence condition.

First, note that $u_1 + v_1 \sqrt{d} \neq \pm 1$ since $v_1 \neq 0$. It is obvious that the generated pairs $(u, v)$ are all distinct since $u_0 + v_0 \sqrt{d} \neq 0$. 
Next, by applying the norm $\mathrm{N}_{\QQ(\sqrt{d})/\QQ}$ to both sides in Formula \eqref{Generating_InfSol}, we get
$$u^2 - d v^2 = (u_0^2 - d v_0^2) (u_1^2 - d v_1^2)^{t k} = c \cdot 1^{t k} = c.$$
Thus the Diophantine equation is satisfied for such $(u, v)$.
By applying $q$ to both sides in Formula \eqref{Generating_InfSol}, we have 
$$ q(u + v \sqrt{d}) = q(u_0 + v_0 \sqrt{d}) $$ since $q(u_1 + v_1 \sqrt{d})^t = q(1)$. This shows that $(u - u_0) + (v - v_0) \sqrt{d} \in (d)$, namely, $u \equiv u_0, \ v \equiv v_0 \pmod{d}$. Therefore, we constructed infinitely many integer solutions $(u, v) \in \ZZ^2$ that satisfy the Diophantine equation and the congruence condition.
\end{proof}

 We now modify Pell's equation for our purpose as follows:

\begin{lem} \label{PellLem1}
Let $a_0, a_1, a_2 \in \ZZ$ be integers such that $a_1$ is even and $a_2$ is either 0 or a non-square positive integer.
Then the Diophantine equation
$$v^2 = a_2 u^2 + a_1 u + a_0$$
for $u, v \in \ZZ$ has infinitely many solutions if the equation has at least one solution.
 In particular, we can find infinitely many $u \in \ZZ$ such that $a_2 u^2 + a_1 u + a_0$ is a square integer if there is at least one such $u$.
\end{lem}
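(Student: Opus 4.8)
The plan is to reduce the statement to Lemma \ref{PellLem0} by completing the square and tracking the resulting congruence condition, treating the two cases $a_2 = 0$ and $a_2$ a non-square positive integer separately. Suppose the equation $v^2 = a_2 u^2 + a_1 u + a_0$ has at least one integer solution $(u_0, v_0)$. First I would multiply through by $4a_2$ (in the case $a_2 \neq 0$) to complete the square on the right-hand side: since $a_1$ is even, write $a_1 = 2b_1$, and then
$$
(2a_2 u + a_1)^2 - 4a_2 v^2 = a_1^2 - 4 a_2 a_0.
$$
Setting $U := 2a_2 u + a_1$ and $V := v$, this is a generalized Pell equation $U^2 - d V^2 = c$ with $d := 4 a_2$ and $c := a_1^2 - 4 a_2 a_0$. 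Since $a_2$ is a non-square positive integer, $d = 4a_2$ is also a non-square positive integer, so Lemma \ref{PellLem0} applies and yields infinitely many solutions $(U, V)$ satisfying the congruence $U \equiv U_0 \pmod{d}$, where $U_0 := 2a_2 u_0 + a_1$.

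The key point — and the reason I insist on carrying along the congruence condition from Lemma \ref{PellLem0} rather than merely its conclusion of infinitely many solutions — is that I must recover an \emph{integer} value of $u$ from each solution $U$. This requires $U \equiv a_1 \pmod{2 a_2}$, i.e. $2a_2 \mid (U - a_1)$, so that $u = (U - a_1)/(2a_2)$ is an integer. The congruence furnished by Lemma \ref{PellLem0} is modulo $d = 4a_2$, and since $U_0 = 2a_2 u_0 + a_1 \equiv a_1 \pmod{2a_2}$ by construction, any $U \equiv U_0 \pmod{4a_2}$ automatically satisfies $U \equiv a_1 \pmod{2a_2}$ as $2a_2 \mid 4a_2$. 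Thus each of the infinitely many $U$ produced gives a genuine integer $u$, and the pairs $(u, v)$ are infinitely many distinct integer solutions of the original equation. I anticipate this divisibility bookkeeping to be the main obstacle: without the refined congruence-preserving version in Lemma \ref{PellLem0}, the generic Pell solutions $U$ need not lie in the correct residue class mod $2a_2$, and one could not clear the denominator to return to $u$.

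Finally I would dispose of the degenerate case $a_2 = 0$, where the equation reads $v^2 = a_1 u + a_0$ with $a_1$ even. If $a_1 = 0$ as well, then $a_0 = v_0^2$ is a fixed square and every $u \in \ZZ$ is a solution, giving infinitely many trivially. If $a_1 \neq 0$, then starting from a solution $(u_0, v_0)$ I would produce new solutions by setting $v := v_0 + a_1 k$ for $k \in \ZZ$ and solving for $u$; the identity $v^2 - v_0^2 = (v - v_0)(v + v_0) = a_1 k (2 v_0 + a_1 k)$ is divisible by $a_1$, so $u := u_0 + k(2v_0 + a_1 k)$ is an integer satisfying $v^2 = a_1 u + a_0$, yielding infinitely many distinct solutions. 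The last assertion of the lemma — that $a_2 u^2 + a_1 u + a_0$ is a square for infinitely many $u$ whenever it is a square for one — is just the restatement that $v^2 = a_2 u^2 + a_1 u + a_0$ has infinitely many solutions, with $v$ ranging over the square roots, so it follows immediately.
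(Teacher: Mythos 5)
Your proposal is correct and follows essentially the same route as the paper: complete the square to reduce to a generalized Pell equation $U^2 - dV^2 = c$, then invoke the congruence-refined Lemma \ref{PellLem0} precisely so that the integrality of $u$ can be recovered from $U$. The only cosmetic difference is that you multiply through by $4a_2$ (so $d = 4a_2$, which is indeed non-square when $a_2$ is) whereas the paper multiplies by $a_2$ and uses the evenness of $a_1$ to keep $U = a_2 u + \tfrac{1}{2}a_1$ and $c = \tfrac{1}{4}(a_1^2 - 4a_0a_2)$ integral; your handling of the degenerate case $a_2 = 0$ just spells out what the paper calls straightforward.
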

\begin{proof}
First, assume $a_2 = 0$. Then it is straightforward to check that the Diophantine equation has infinitely many solutions. Thus, we may assume $a_2 \neq 0$.
Hence, $a_2$ is a non-square positive integer. Recall that $a_1$ is even, so we can rewrite the equation as
\begin{align} \label{PellinLem}
\left( a_2 u + \frac{1}{2} a_1 \right)^2 - a_2 v^2 = \frac{1}{4} \left( a_1^2 - 4 a_0 a_2 \right)
\end{align}
Since $a_2$ is non-square, the above equation can be considered as 
\begin{align} \label{PellinLemC}
U^2-dV^2=c 
\end{align}
by letting $U = a_2 u+\frac{1}{2}a_1 $, $V = v$, $d = a_2$, and $c = \frac{1}{4} \left( a_1^2 - 4 a_0 a_2 \right)$.
Recall $U,V,d,c \in \ZZ$.
By the assumption, we have at least one solution $(U_0,V_0)$ for this Diophantine equation.
Then Lemma \ref{PellLem0} implies that Equation \eqref{PellinLemC} has infinitely many solutions for $(U,V)$ with $U \equiv U_0, \ V \equiv V_0 \pmod{d}$.
Since $v = V$, we obtain infinitely many $v \in \ZZ$. 
For $u$, note that $U \equiv U_0 \pmod{a_2}$, so for infinitely many $U \in \ZZ$, we can find $u$ from $U = a_2 u+\frac{1}{2}a_1$.
Therefore, we have infinitely many $(u,v) \in \ZZ^2$ as desired.
We complete the proof.
\end{proof}


\section{The orbit test and the corresponding modulo homomorphism} \label{Sec-Cong&pi}
 We devote this section to giving several results obtained from the orbit test, Proposition \ref{Orbit_Test}, and the corresponding modulo homomorphism.
 Throughout this section, we assume that $p,q$ are integers with $p \neq 0$ and $\gcd(p,q)=1$.
 Then the map $$ \pi : \SL2 \left( \ZZ \left[ \frac{1}{p} \right] \right) \to \SL2 \left( \ZZ \left[ \frac{1}{p} \right] \ \middle/ \ q \ZZ \left[ \frac{1}{p} \right] \right) \cong \SL2 (\ZZ/q \ZZ),$$
 induces a group homomorphism $$ \pi_{\frac{q}{p}} : G_{\frac{q}{p}} \to \SL2 (\ZZ/q \ZZ), $$ and this map is called the corresponding modulo homomorphism.
 As a first application, we show that the converse of the orbit test (Proposition \ref{Orbit_Test}) is false.
 Using a similar technique, we characterize when $G_\alpha$ contains special types of matrices, such as $-I$ and hollow matrices.

\subsection{The converse of the orbit test}
 In this subsection, we will show that the converse of the orbit test is false.
We investigate the condition for $\alpha=\frac{q}{p}$ to satisfy the orbit test.

\begin{prop} \label{Converse_OT1}
 Suppose that a group $G_{\frac{q}{p}}$ satisfies the orbit test. Then $|q|$ is either $2$ or an odd number.
\end{prop}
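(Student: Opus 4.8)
The plan is to pass to the corresponding modulo homomorphism $\pi_{\frac{q}{p}}\colon G_{\frac{q}{p}}\to \SL2(\ZZ/q\ZZ)$ and to read off the divisibility constraints that the orbit test forces on $q$. The starting observation is that $\pi_{\frac{q}{p}}$ sends the generators to very restricted images: since $\alpha=\frac{q}{p}=q\cdot\frac1p\equiv 0 \pmod{q\ZZ[\frac1p]}$, the matrix $B_\alpha$ maps to the identity, while $A$ maps to $\begin{bmatrix} 1 & 1 \\ 0 & 1 \end{bmatrix}$. Consequently the image of $\pi_{\frac{q}{p}}$ lies in the cyclic unipotent subgroup $\left\{ \begin{bmatrix} 1 & n \\ 0 & 1\end{bmatrix} : n\in\ZZ/q\ZZ \right\}$, so every $g=\begin{bmatrix} a & b \\ c & d\end{bmatrix}\in G_{\frac{q}{p}}$ satisfies $a\equiv d\equiv 1$ and $c\equiv 0 \pmod{q\ZZ[\frac1p]}$ after reduction. (This is consistent with the index of $K_{\frac{q}{p}}$ being $q$.)

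Next I would translate the orbit test into relations among the entries of such a $g$. Recall that $g\cdot\infty=\frac{a}{c}$ and $g\cdot 0=\frac{b}{d}$. If $g$ witnesses the orbit test through $g\cdot\infty=\frac12$, then necessarily $c\neq 0$ and $c=2a$ as an identity in $\ZZ[\frac1p]$; if instead $g\cdot 0=\frac12$, then $d\neq 0$ and $d=2b$ in $\ZZ[\frac1p]$. I would handle these two cases separately, since they yield different (but compatible) conclusions.

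Finally, I would apply the reduction homomorphism to each entry relation. In the case $c=2a$, reducing modulo $q\ZZ[\frac1p]$ and using $\bar c=0$, $\bar a=1$ gives $0=2$ in $\ZZ/q\ZZ$, hence $q\mid 2$ and $q\in\{1,2\}$. In the case $d=2b$, reducing and using $\bar d=1$ gives $1=2\bar b$, so $2$ is invertible modulo $q$ and therefore $q$ is odd. Either way $q$ is $2$ or odd, as claimed. The only point that requires genuine care---and the main (mild) obstacle---is that the entries lie in $\ZZ[\frac1p]$ rather than in $\ZZ$, so I must confirm that the scalar relations $c=2a$ and $d=2b$ survive reduction; this is immediate because $\ZZ[\frac1p]\to\ZZ[\frac1p]/q\ZZ[\frac1p]$ is a ring homomorphism sending the rational integer $2$ to $2\in\ZZ/q\ZZ$, and $p$ is a unit modulo $q$ so no denominators obstruct the reduction.
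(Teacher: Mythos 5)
Your proof is correct and follows essentially the same route as the paper: apply the corresponding modulo homomorphism $\pi_{\frac{q}{p}}$, observe that the image of $G_{\frac{q}{p}}$ lies in the cyclic unipotent subgroup (so $a\equiv d\equiv 1$, $c\equiv 0$), translate $g\cdot\infty=\tfrac12$ into $c=2a$ (forcing $q\mid 2$) and $g\cdot 0=\tfrac12$ into $d=2b$ (forcing $q$ odd). Your only cosmetic difference is that you reduce the entry relations via the quotient ring homomorphism directly, whereas the paper writes the entries as $k+a_i\frac{q}{p^n}$ and manipulates integer divisibility; the conclusions match case by case.
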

\begin{proof}
 Without loss of generality, we may assume $p,q > 0$.
 Put $$g=\begin{bmatrix}
a & b \\ c& d \end{bmatrix} \in G_{\alpha}.$$ First suppose $g \cdot 0=\frac{1}{2}$. This means $\frac{b}{d}=\frac{1}{2}.$ By using the homomorphism $\pi_{\frac{q}{p}},$ we have
$$ \pi_{\frac{q}{p}}(g)=\begin{bmatrix}
1 & k \\ 0 & 1 \end{bmatrix}$$ for some $0 \leq k < q$. Thus for some $n,a_1,a_2 \in \ZZ$, we have $$
b=k+a_1 \frac{q}{p^n} , \quad d=1+a_2 \frac{q}{p^n}.$$
Since $\frac{b}{d}=\frac{1}{2}$, we obtain $$ 2k+2 a_1 \frac{q}{p^n} = 1+a_2 \frac{q}{p^m},$$
i.e., $q(a_2-2a_1) = (2k-1)p^n$. Hence $q|(2k-1)$ since $\gcd(p, q) = 1$. This shows that $q$ is odd.

 Now assume that $g \cdot \infty=\frac{1}{2}$. This means $\frac{a}{c}=\frac{1}{2}$. Since 
 $$ \pi_{\frac{q}{p}}(g)=\begin{bmatrix}
1 & k \\ 0 & 1 \end{bmatrix}$$ for some $0 \leq k < q$,
we have
$$a=1+a_3\frac{q}{p^n} , \quad c=0+a_4 \frac{q}{p^n}$$
 for some $n,a_3,a_4 \in \ZZ$.
Since $\frac{a}{c}=\frac{1}{2}$, we obtain $$ 2+2a_3\frac{q}{p^n}=a_4 \frac{q}{p^n},$$
i.e., $q(a_4-2a_3)=2 p^n$. Hence, $q|2$ since $\gcd(p, q) = 1$. This shows that $|q| \in \{ 1, 2 \}$.

 We conclude that $|q|$ should be either $2$ or an odd number.
\end{proof}

 Recall that there are infinitely many non-free rational numbers whose numerator is even and between $4$ and $27$ (see \cite{kim2022non}).
 Thus, the following corollary follows immediately from the previous proposition.

\begin{cor} \label{Converse_OT2}
 The converse of the orbit test is false.
 Moreover, there are infinitely many non-free rational numbers that never satisfy the orbit test.
\end{cor}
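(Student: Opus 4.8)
The plan is to read Proposition \ref{Converse_OT1} in its contrapositive form: if $|q|$ is even and $|q| \neq 2$, then $G_{\frac{q}{p}}$ does \emph{not} satisfy the orbit test. It therefore suffices to exhibit infinitely many non-free rational numbers $\frac{q}{p}$ (with $p$ prime, $\gcd(p,q)=1$, and $\left|\frac{q}{p}\right|<4$) whose numerator $|q|$ is even and different from $2$; each such number is non-free yet fails the orbit test, which simultaneously refutes the converse and produces the desired infinite family.

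For the concrete construction I would fix a single even numerator and let the denominator run over primes. Take $q = 4$ and let $p$ range over all odd primes. For every such $p$ we have $\gcd(4,p)=1$ and $\left|\frac{4}{p}\right| \leq \frac{4}{3} < 4$, so the hypotheses of the theorem quoted from \cite{kim2022non} (namely $|q| \leq 27$, $|q| \neq 24$, and $\left|\frac{q}{p}\right| < 4$) are met with $|q| = 4$; hence each $\frac{4}{p}$ is non-free. Distinct odd primes yield distinct rationals, so this is already an infinite family. Since $|q| = 4$ is even and not $2$, the contrapositive of Proposition \ref{Converse_OT1} shows that none of the groups $G_{\frac{4}{p}}$ satisfies the orbit test, which is exactly what the statement asserts.

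The only point that requires care is the compatibility of the two inputs. Proposition \ref{Converse_OT1} is proved under the standing assumption that $p$ is prime (so that the modulo homomorphism $\pi_{\frac{q}{p}}$ is available), whereas the non-freeness theorem of \cite{kim2022non} is stated for arbitrary rationals; the construction must therefore keep $p$ prime, which is why fixing $q=4$ and varying over odd primes (rather than fixing $p$ and varying the even numerator over the finite range $4 \leq |q| \leq 27$) is the clean choice. This is essentially the whole difficulty — there is no hard estimate involved — so I expect the argument to be very short once the contrapositive of Proposition \ref{Converse_OT1} is combined with the cited non-freeness result.
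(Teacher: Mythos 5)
Your proposal is correct and follows essentially the same route as the paper: the paper also combines the contrapositive of Proposition \ref{Converse_OT1} with the result of \cite{kim2022non} guaranteeing infinitely many non-free rationals with even numerator between $4$ and $27$. Your specialization to $q=4$ with $p$ ranging over odd primes is just a concrete instance of this, and your remark about keeping $p$ prime (so that $\pi_{\frac{q}{p}}$ is available) is exactly the right compatibility check.
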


\subsection{Special types of matrices}
 In this subsection, we investigate an equivalent condition for $G_\alpha$ to contain certain types of matrices.
 We begin with the negative identity matrix $-I$.
 
\begin{lem} \label{3--I}
Let $\alpha=\frac{q}{p}$ be a rational number.
Then $$-I \in G_{\alpha} \textnormal{ if and only if }
|q|=1 \textnormal{ or } 2.$$
\end{lem}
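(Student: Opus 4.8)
The plan is to prove both directions of the biconditional $-I \in G_{\frac{q}{p}} \iff q \in \{1,2\}$ by exploiting the corresponding modulo homomorphism $\pi_{\frac{q}{p}}\colon G_{\frac{q}{p}} \to \SL2(\ZZ/q\ZZ)$. The key structural observation is that both generators $A = \begin{bmatrix} 1 & 1 \\ 0 & 1\end{bmatrix}$ and $B_{\frac{q}{p}} = \begin{bmatrix} 1 & 0 \\ q/p & 1\end{bmatrix}$ reduce modulo $q$ to upper/lower triangular matrices, and in fact $\pi_{\frac{q}{p}}(B_{\frac{q}{p}})$ is the identity since $q/p \equiv 0 \pmod{q}$ (as $q \cdot \ZZ[1/p] = q\ZZ[1/p]$ absorbs the $1/p$). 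Therefore the image $\pi_{\frac{q}{p}}(G_{\frac{q}{p}})$ is generated by the single unipotent matrix $\pi_{\frac{q}{p}}(A) = \begin{bmatrix} 1 & 1 \\ 0 & 1\end{bmatrix} \bmod q$, which is the cyclic group of order $q$ consisting of all $\begin{bmatrix} 1 & k \\ 0 & 1\end{bmatrix}$ for $0 \le k < q$.

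For the forward direction, I would argue by contraposition on the image: if $-I \in G_{\frac{q}{p}}$, then $\pi_{\frac{q}{p}}(-I) = -I$ must lie in the cyclic subgroup $\left\{ \begin{bmatrix} 1 & k \\ 0 & 1\end{bmatrix} : 0 \le k < q \right\}$ of $\SL2(\ZZ/q\ZZ)$. The matrix $-I \bmod q$ is upper unitriangular precisely when its diagonal entries satisfy $-1 \equiv 1 \pmod q$, i.e. when $q \mid 2$, forcing $q = 1$ or $q = 2$. This is the clean and essentially automatic half.

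The reverse direction requires actually exhibiting $-I$ as a word in $A$ and $B_{\frac{q}{p}}$, and this is where I expect the main work to lie. For $q = 1$ we have $\alpha = 1/p$, and by the (commented-out) fact that $G_{1/p} = \SL2(\ZZ[1/p])$ — or more elementarily since $G_1 = \SL2(\ZZ)$ contains $-I = (AB_1^{-1}A)^2$ via the standard relation $\left(\begin{bmatrix} 1 & 1 \\ 0 & 1\end{bmatrix}\begin{bmatrix} 1 & 0 \\ -1 & 1\end{bmatrix}\begin{bmatrix} 1 & 1 \\ 0 & 1\end{bmatrix}\right)^2 = -I$ — the element $-I$ is present. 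For $q = 2$, i.e. $\alpha = 2/p$, I would either produce an explicit word or reduce to a known non-free presentation; the cleanest route is to find $g \in G_{\frac{2}{p}}$ with $\pi_{\frac{2}{p}}(g) = -I$ (possible since $-I \equiv \begin{bmatrix} 1 & 1 \\ 0 & 1\end{bmatrix} \pmod 2$ lies in the image) and then check that the specific word realizing this equals $-I$ on the nose in $\SL2(\ZZ[1/p])$.

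The hard part will be the explicit realization for $q = 2$: the modulo homomorphism only guarantees a preimage of $-I \bmod 2$, not that $-I$ itself is hit, so I must verify the integrality and the exact matrix identity by hand, likely by finding a short relator word and computing its image in $\SL2(\ZZ[1/p])$ directly. I would search among conjugates of $B_{\frac{2}{p}}^{p}$ and products involving $A^{\pm 1}$, since raising the lower-triangular generator to the $p$-th power clears the denominator and produces integer entries, making the relevant word a candidate whose square or suitable conjugate yields $-I$.
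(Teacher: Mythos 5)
Your ``only if'' direction is correct and is essentially the paper's own argument: since $\frac{q}{p} \in q\ZZ\left[\frac{1}{p}\right]$, the generator $B_{\frac{q}{p}}$ dies under $\pi_{\frac{q}{p}}$, so every element of $G_{\frac{q}{p}}$ maps to a power of $\begin{bmatrix} 1 & 1 \\ 0 & 1 \end{bmatrix} \bmod q$, and $-I$ being upper unitriangular mod $q$ forces $-1 \equiv 1 \pmod{q}$, i.e.\ $q \mid 2$. (Your description of the image as the cyclic group of order $q$ generated by $\pi(A)$, rather than all of $\SL2(\ZZ/q\ZZ)$, is the correct reading, and it is what the paper's proofs actually use.) The $q=1$ half of the converse is also fine, either via the cited fact $G_{\frac{1}{p}} = \SL2\left(\ZZ\left[\frac{1}{p}\right]\right)$ or via $(AB_1^{-1}A)^2 = -I$ together with $B_1 = B_{\frac{1}{p}}^{\,p} \in G_{\frac{1}{p}}$.

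The genuine gap is the $q=2$ half of the converse: you only describe where you \emph{would} search for a word equal to $-I$ and never exhibit one, so the biconditional is not proved as written. You are right that the modulo homomorphism cannot certify this (it only gives a preimage of $-I \bmod 2$, and the kernel is enormous), so an explicit identity is unavoidable — and your search space does contain it, which is exactly how the paper closes the case: $B_{\frac{2}{p}}^{\,p} = B_2 = \begin{bmatrix} 1 & 0 \\ 2 & 1 \end{bmatrix} \in G_{\frac{2}{p}}$, and $B_2 A^{-1} = \begin{bmatrix} 1 & -1 \\ 2 & -1 \end{bmatrix}$ has trace $0$ and determinant $1$, so by Cayley--Hamilton $(B_2A^{-1})^2 = -I$. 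This single identity in fact settles both cases at once, since $B_2 = B_1^2$ gives $-I \in G_2 \leq G_1$, and $G_1 \leq G_{\frac{1}{p}}$, $G_2 \leq G_{\frac{2}{p}}$ by raising $B_{\frac{q}{p}}$ to the $p$-th power. So your plan is the right one, but stopping at ``I would search among conjugates of $B_{\frac{2}{p}}^{\,p}$'' leaves the forward implication of the ``if'' part unestablished; the two-line computation above is the missing step.
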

\begin{proof}
 The ``if" part is easy. Recall $-I \in G_1, G_2$ since $\left (B_2A^{-1} \right)^2=-I$.
 Conversely, assume $-I \in G_{\alpha}$ for rational $\alpha=\frac{q}{p}$.
 Note that $$\pi_{\frac{q}{p}}(-I)=\begin{bmatrix}
1 & k \\ 0 & 1 \end{bmatrix}$$ for some $0 \leq k < q$.
From the $(1,1)$-entry, we have
$$ -1 = 1 + a_1 \frac{q}{p^n}$$
for some $a_1,n \in \ZZ$. Thus $q | a_1 q = 2 p^n$. Since $\gcd(p,q)=1$, we obtain $q|2$, i.e., $|q| \in \{ 1, 2 \}$.
\end{proof}

Recall that when $\alpha=\frac{q}{p}$, $G_{\alpha}$ has a torsion element if and only if $|q|=1,2,3$ \cite[Theorem 1]{farbman1995non}.
Thus, Lemma \ref{3--I} studies a particular torsion element $-I$.
We point out that some previous works focused on subgroups of $\PSL_2 (\CC)$ (\cite{S1}, \cite{aimi2020classification}, and \cite{akiyoshi2021classification}), and the lifting problem of subgroups of $\PSL_2 (\CC)$ to $\SL2 (\CC)$ has attracted considerable attention (\cite{gilmanlifting} and \cite{andrew2025lifting}).
The importance of the element $-I$ lies in the fact that it is the unique nontrivial central torsion element of $\SL2 (\CC)$.
Its occurrence governs whether a subgroup of $\PSL_2(\CC)$ admits a lift to $\SL2 (\CC)$, making it a fundamental obstruction in the lifting problem.

 Next, we consider the hollow matrix case.
 Recall that a matrix is hollow if its diagonal entries are all zero.
 Indeed, we establish the following.

\begin{lem} \label{3-Hol-Diagonal}
Let $\alpha=\frac{q}{p}$ be a rational number.
Then $G_{\alpha}$ contains a matrix of the form either $\begin{bmatrix}
d & x \\ y & 0
\end{bmatrix}$ or $\begin{bmatrix}
0 & x \\ y & d
\end{bmatrix}$ if and only if $|q|=1$.
In particular, $G_{\alpha}$ contains a hollow matrix if and only if $|q|=1$.
\end{lem}
\begin{proof}
 If $\alpha=\frac{1}{p}$, then $G_{\alpha}=\SL2 \left( \ZZ \left[ \frac{1}{p} \right] \right)$ so obviously $G_{\alpha}$ contains a hollow matrix
 $\begin{bmatrix}
0 & -p \\ \frac{1}{p} & 0
\end{bmatrix}.$
Conversely, suppose $g:=\begin{bmatrix}
d & x \\ y & 0
\end{bmatrix} \in G_{\alpha}$.
Then we get $$\pi_{\frac{q}{p}}(g)=\begin{bmatrix}
1 & k \\ 0 & 1 \end{bmatrix}$$ for some $0 \leq k < q$.
From the $(2,2)$-entry, we have
$$ 0 = 1 + a_1 \frac{q}{p^n}. $$
This means $q|p^n$. Since $\gcd(p,q)=1$, $|q|$ should be $1$. The case of $\begin{bmatrix}
0 & x \\ y & d
\end{bmatrix}$ can be treated similarly.
\end{proof}

One may ask whether an analogous statement holds for diagonal matrices.
Unfortunately, we cannot obtain any analogous result in the case of diagonal matrices.
Indeed, by explicit calculation, we show that $G_{\alpha}$ contains a non-identity diagonal matrix when $\alpha=\frac{1}{m}+\frac{1}{n}$ for some $m,n \in \ZZ$.

\begin{lem} \label{3-Diagonal}
Let $\alpha=\frac{q}{p}$ be a rational number.
Then $G_{\alpha}$ contains a nontrivial diagonal matrix if $|\alpha|=\frac{1}{m}+\frac{1}{n}$.
\end{lem}
\begin{proof}
 Let $\alpha=\frac{1}{m}+\frac{1}{n}$ with $\alpha>0$ so we may assume $m,n > 0$.
 We shall find nonzero integers $a_1$, $b_1$, $a_2$, $b_2$, and $a_3$ such that the matrix
$$ W := A^{a_1} B_{\alpha}^{b_1} A^{a_2} B_{\alpha}^{b_2} A^{a_3} $$ is non-identity and diagonal.
We have $$ W = A^{a_1} B_{\alpha}^{b_1} A^{a_2} B_{\alpha}^{b_2} A^{a_3} = \begin{bmatrix}
* & f_2(\alpha) \\ f_1(\alpha)  & f_3(\alpha)
\end{bmatrix} $$
where
\begin{align}
\begin{aligned}
f_1(\alpha) &= a_2b_1b_2 \alpha^2 + (b_1+b_2)\alpha, \\
f_2(\alpha) &= (a_1a_2a_3b_1b_2)\alpha^2 + (a_1a_2b_1+a_1a_3b_1+a_1a_3b_2 + a_2a_3b_2)\alpha+ a_1 + a_2 + a_3, \\
f_3(\alpha) &= (a_2a_3b_1b_2)\alpha^2 + (a_2b_1+a_3b_1+a_3b_2)\alpha + 1 .
\end{aligned}
\end{align}
Since $W$ is diagonal, we have $f_1(\alpha) = f_2(\alpha) = 0$. From the vanishing of $f_1(\alpha)$, we have
$$-\frac{1}{a_2}\left( \frac{1}{b_1}+\frac{1}{b_2} \right) = \alpha=\frac{1}{m}+\frac{1}{n}.$$
We choose $a_2=1$, $b_1=-m$, and $b_2=-n$.

Since $f_1(\alpha) = 0$,
\begin{align}
\begin{aligned}
f_2(\alpha) & = (a_1a_2a_3b_1b_2)\alpha^2 + (a_1a_2b_1+a_1a_3b_1+a_1a_3b_2 + a_2a_3b_2)\alpha+ a_1 + a_2 + a_3 \\
& = a_1a_3 f_1(\alpha) + a_2(a_1b_1 + a_3b_2)\alpha + a_1 + a_2 + a_3 \\
& = a_2(a_1b_1 + a_3b_2)\alpha + a_1 + a_2 + a_3.
\end{aligned}
\end{align}
From the vanishing of $f_2(\alpha)$, we get $a_2(a_1b_1 + a_3b_2)\alpha + a_1 + a_2 + a_3 = 0$.
Since $\alpha=\frac{1}{m}+\frac{1}{n}$ and $(a_2, b_1, b_2) = (1, -m, -n)$, we have
$$ (m a_1 +n a_3)\left( \frac{1}{m}+\frac{1}{n} \right) - a_1 - a_3 = 1. $$
This is equivalent to the linear Diophantine equation $m^2a_1 + n^2a_3 = mn$, which has infinitely many integer solutions $(a_1, a_3)$ since $\gcd(m^2, n^2) = \gcd(m, n)^2 | mn$. 
Thus there exist nonzero integers $a_1$ and $a_3$ that makes $W$ diagonal.


Since $f_1(\alpha) = 0$,
$$f_3(\alpha) = (a_2a_3b_1b_2)\alpha^2 + (a_2b_1+a_3b_1+a_3b_2)\alpha + 1$$
$$= a_3 f_1(\alpha) + a_2 b_1 \alpha + 1 = a_2 b_1 \alpha + 1.$$

Since $(a_2, b_1, b_2) = (1, -m, -n)$, we have $f_3(\alpha) = -m \alpha + 1$. This implies that $f_3(\alpha) \neq 1$, or $W \neq I_2$, since $\alpha > 0$. Thus $G_{\alpha}$ contains a non-identity diagonal matrix when $\alpha=\frac{1}{m}+\frac{1}{n}$.
\end{proof}

\begin{rmk}
 Lemma~\ref{3-Diagonal} gives us the condition for $G_\alpha$ to contain a diagonal matrix.
 Similar to the previous lemmata, we want to find a partial converse of Lemma~\ref{3-Diagonal}.
 However, for given $\alpha=\frac{q}{p}$, if $X=\begin{bmatrix}
p^{\phi(q)} & 0 \\ 0 & p^{-\phi(q)}
\end{bmatrix}$, then we have $\pi_{\frac{q}{p}}(X) = \begin{bmatrix}
1 & 0 \\ 0 & 1
\end{bmatrix}.$
Thus, we cannot obtain any information about $\alpha$.
\end{rmk}

 We point out that the analogous result for a diagonal matrix is particularly significant from the following perspectives.
 One motivation is the computation of stabilizer subgroups. Recall the action of $G_{\alpha}$ on $S^1 \cong \RR \cup \{ \infty \}$ (see Subsection \ref{Pre_Orbit-Test} and \cite{MR4237486}).
 Using the action $G_{\alpha}$ on $S^1$, K. Kim and the first author constructed a new graph called the generalized Farey graph $\Gamma_{\alpha}$.
 The group $G_{\alpha}$ acts naturally on $\Gamma_{\alpha}$, and it is natural to ask how well this action behaves.
 For more details on the graph $\Gamma_\alpha$, we refer to \cite{MR4237486}.
 The analogous result for a diagonal matrix plays a key role in computing edge stabilizer subgroups.
 By the construction of $\Gamma_\alpha$, all edges in the graph are of the form $g \cdot \ell$ where $g \in G_{\alpha}$ and $\ell$ is the geodesic between $0$ and $\infty$ in $\overline{\HH^2}=\HH^2 \cup \partial \HH^2$.
 Thus, it is enough to calculate the stabilizer subgroup $\Stab_{G_{\alpha}}(\ell)$.
 Since every diagonal matrix in $G_\alpha$ lies in $\Stab_{G_{\alpha}}(\ell)$, determining which diagonal matrices belong to $G_{\alpha}$ is essential for computing this stabilizer.
 One of the purposes of Lemma \ref{3--I} and Lemma \ref{3-Hol-Diagonal} is precisely to find this stabilizer subgroup.
 An understanding of this stabilizer subgroup may lead to new algebraic information about $G_{\alpha}$, such as decompositions, in a manner analogous to Bass–Serre theory.
 In terms of the action $G_\alpha \curvearrowright \Gamma_{\alpha}$, we can extend Lemma~\ref{3-Hol-Diagonal} as follows.

\begin{prop}
 Let $\alpha=\frac{q}{p} \in \QQ$. Then the following are equivalent.
 \begin{enumerate}
    \item $|q|=1$, i.e., $|\alpha|=\frac{1}{p}$,
    \item $G_{\alpha} = \SL2 \left( \ZZ \left[ \frac{1}{p} \right] \right)$ as a set,
    \item $G_{\alpha}$ contains a hollow matrix,
    \item with respect to the action $G_\alpha \curvearrowright \Gamma_{\alpha}$, there exists an inversion element in $\Stab_{G_{\alpha}}(\ell)$,
    \item with respect to the action $G_\alpha \curvearrowright \Gamma_{\alpha}$, there exists an element which maps $0$ to $\infty$,
    \item with respect to the action $G_\alpha \curvearrowright \Gamma_{\alpha}$, there exists an element which maps $\infty$ to $0$.
 \end{enumerate}
\end{prop}

Another motivation comes from a conjecture of Carl-Fredrik Nyberg Brodda.
He proposed the following conjecture in \cite{nyberg2023congruence}.

\begin{conj}[Conjecture 1 in \cite{nyberg2023congruence}] \label{Carl-Conj}
 Let $\frac{q}{p} \in \QQ \cap (-4,4)$ with $\gcd(p,q)=1$.
 Then $G_{\frac{q}{p}}$ is equal to the congruence subgroup $$ \pi^{-1}\left( \left< \begin{bmatrix}
 1 & 1 \\ 0 & 1
 \end{bmatrix} \right>\right)$$ of $G_{\frac{1}{p}} = \SL2 \left( \ZZ \left[ \frac{1}{p} \right] \right)$.
 In particular, $G_{\frac{q}{p}}$ is a finite index subgroup of $G_{\frac{1}{p}}$.
\end{conj}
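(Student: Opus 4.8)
The plan is to separate the two inclusions and isolate the essential difficulty. Write $U:=\langle\pi(A)\rangle\le\SL2(\ZZ/q\ZZ)$ for the cyclic unipotent subgroup appearing in the conjecture, and $\Gamma(q):=\ker\pi$ for the principal congruence subgroup of $\SL2(\ZZ[\frac{1}{p}])$. The inclusion $G_{\frac{q}{p}}\subseteq\pi^{-1}(U)$ is immediate: since $\gcd(p,q)=1$ we have $\frac{q}{p}\in q\,\ZZ[\frac{1}{p}]$, so $\pi(B_{\frac{q}{p}})=I$, while $\pi(A)$ generates $U$; hence both generators of $G_{\frac{q}{p}}$ land in $U$. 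For the reverse inclusion, note that $G_{\frac{q}{p}}$ and $\pi^{-1}(U)$ both surject onto $U$ under $\pi$, and the former is contained in the latter; comparing kernels therefore yields
$$ G_{\frac{q}{p}}=\pi^{-1}(U)\iff\Gamma(q)\subseteq G_{\frac{q}{p}}\iff K_{\frac{q}{p}}=\Gamma(q). $$
Thus the entire conjecture reduces to the single inclusion $\Gamma(q)\subseteq G_{\frac{q}{p}}$.

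To attack this I would exhibit a generating set of $\Gamma(q)$ inside $G_{\frac{q}{p}}$. Because $S=\{\infty,p\}$ has two elements, $\SL2(\ZZ[\frac{1}{p}])$ satisfies the congruence subgroup property, and a generating set of $\Gamma(q)$ is furnished by the congruence unipotents $\begin{bmatrix}1 & qs\\ 0 & 1\end{bmatrix}$ and $\begin{bmatrix}1 & 0\\ qs & 1\end{bmatrix}$ with $s\in\ZZ[\frac{1}{p}]$, together with their conjugates. When $s\in\ZZ$ these are already visible in $G_{\frac{q}{p}}$: the first are the powers $A^{qs}$, and the identity $B_{\frac{q}{p}}^{\,ps}=\begin{bmatrix}1&0\\ qs&1\end{bmatrix}$ supplies the second. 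The genuine obstruction is to realise those with a $p$-power denominator, such as $\begin{bmatrix}1&0\\ q/p^{k}&1\end{bmatrix}$, which lie in $\Gamma(q)$ but are not manifestly powers of $B_{\frac{q}{p}}$. The natural device is conjugation by a scaling element: if $G_{\frac{q}{p}}$ contained a diagonal matrix $\begin{bmatrix}p^{j}&0\\ 0&p^{-j}\end{bmatrix}$ with $p^{j}\equiv 1\pmod q$ (so that it lies in $K_{\frac{q}{p}}$), then
$$ \begin{bmatrix}p^{j}&0\\ 0&p^{-j}\end{bmatrix}B_{\frac{q}{p}}\begin{bmatrix}p^{-j}&0\\ 0&p^{j}\end{bmatrix}=\begin{bmatrix}1&0\\ q/p^{2j+1}&1\end{bmatrix}\in G_{\frac{q}{p}}, $$
and iterating this would sweep out all the missing $p$-denominator unipotents, giving $\Gamma(q)\subseteq G_{\frac{q}{p}}$. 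This is exactly where the diagonal matrices discussed above enter: in the action of $G_{\frac{q}{p}}$ on the generalized Farey graph $\Gamma_\alpha$ of \cite{MR4237486}, and on the Bruhat--Tits tree of $\SL2(\QQ_p)$, such diagonal elements are the edge stabilisers, and controlling them is tantamount to understanding the $p$-adic direction of the action.

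The step I expect to be the main obstacle is producing such a diagonal element, equivalently establishing that $G_{\frac{q}{p}}$ acts cofinitely on the $\SL2(\QQ_p)$-tree, i.e. that $G_{\frac{q}{p}}$ has finite index in $\SL2(\ZZ[\frac{1}{p}])$ at all. One should calibrate expectations here: if $G_{\frac{q}{p}}=\pi^{-1}(U)$, then $G_{\frac{q}{p}}$ has finite index in $\SL2(\ZZ[\frac{1}{p}])$, which has virtual cohomological dimension two, so $G_{\frac{q}{p}}$ cannot be free; hence the conjecture \emph{implies} Conjecture \ref{M-Conj}, a long-standing open problem. A complete proof must therefore subsume the non-freeness of every rational in $(-4,4)$, which lies beyond current techniques. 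A realistic programme is to induct on the prime factorisation of $q$, first using the orbit test (Proposition \ref{Orbit_Test}) and the Pell-type families of this paper to produce explicit relations, hence nontrivial elements of $K_{\frac{q}{p}}=G_{\frac{q}{p}}\cap\Gamma(q)$, and then to organise these via a Bass--Serre analysis of the tree action, computing the vertex and edge stabilisers, the latter governed by the diagonal matrices above, into the generating set of $\Gamma(q)$ described in the previous paragraph. Making the $p$-adic (tree) direction cofinite is the linchpin on which the whole argument turns.
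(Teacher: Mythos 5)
You have attempted to prove something that the paper does not prove and does not claim to prove: the statement labelled \ref{Carl-Conj} is an \emph{open conjecture}, due to Nyberg Brodda (cited from \cite{nyberg2023congruence}), and the paper's surrounding remark makes clear that it remains unresolved --- indeed the paper observes that it \emph{implies} Conjecture \ref{M-Conj}, the long-standing non-freeness conjecture for all rationals in $(-4,4)$. Your proposal, to its credit, acknowledges this, but that acknowledgement means it is a research programme rather than a proof, and the gaps in it are exactly the open problems. Concretely: your reduction $G_{\frac{q}{p}}=\pi^{-1}(U)\iff\Gamma(q)\subseteq G_{\frac{q}{p}}$ is correct (both directions check out, since $\pi(B_{\frac{q}{p}})=I$ and $\pi(A)$ generates $U$), but the next two steps fail to close. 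First, the generating set of $\Gamma(q)$ you invoke consists of congruence unipotents \emph{together with their conjugates by arbitrary elements of} $\SL2\left(\ZZ\left[\frac{1}{p}\right]\right)$; those conjugating elements are not available inside $G_{\frac{q}{p}}$, so exhibiting $A^{qs}$ and $B_{\frac{q}{p}}^{ps}$ for integral $s$ (and even all elementary unipotents with $p$-power denominators) does not yield $\Gamma(q)\subseteq G_{\frac{q}{p}}$ without a separate, nontrivial argument that the elementary matrices alone generate the principal congruence subgroup. Second, the linchpin --- producing a diagonal matrix $\begin{bmatrix} p^{j} & 0 \\ 0 & p^{-j}\end{bmatrix}$ in $G_{\frac{q}{p}}$ --- is precisely what no known technique provides; in particular the tools of this paper (the orbit test, Proposition \ref{Orbit_Test}, and the Pell-equation families) only certify non-freeness of sparse families of rationals, which is far weaker than what any proof of the conjecture must deliver.

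It is worth noting how your analysis relates to the paper's. The paper's remark isolates the same crux --- diagonal matrices in $G_{\frac{q}{p}}$ --- but runs the implication in the \emph{opposite} direction: to \emph{disprove} the conjecture it would suffice to find a coprime pair $(p,q)$ with $\left|\frac{q}{p}\right|<4$ such that $G_{\frac{q}{p}}$ contains no non-identity diagonal matrix, since then $D\cap G_{\frac{q}{p}}$ is finite for the infinite diagonal subgroup $D$ and hence $\left[G_{\frac{1}{p}}:G_{\frac{q}{p}}\right]=\infty$. The paper further shows that its own mod-$q$ homomorphism technique cannot obstruct diagonal matrices (the necessary congruence conditions are always satisfiable, e.g.\ with $l=\phi(q)$), and it records the existence question as Question \ref{Pro-Diagonal}. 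So your identification of the diagonal (equivalently, $p$-adic tree-cofiniteness) direction as the decisive issue is consistent with the paper's analysis; but neither your argument nor anything in the paper decides it, and the statement remains a conjecture, not a theorem.
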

\noindent This conjecture suggests another approach to solving Conjecture \ref{M-Conj}.
 Recall that $G_{\frac{1}{p}} \cong \SL2 \left( \ZZ \left[ \frac{1}{p} \right] \right)$.
 This group is never virtually free since $G_{\frac{1}{p}}$ contains $\ZZ \left[ \frac{1}{p} \right]$ as a subgroup.
 (Even, it is not hyperbolic.)
 This implies that if $G_{\frac{q}{p}}$ is a finite index subgroup of $G_{\frac{1}{p}}$, then $\alpha=\frac{q}{p}$ is non-free. Namely, Conjecture \ref{Carl-Conj} implies Conjecture \ref{M-Conj}.
 
 In order to disprove this conjecture, it suffices to find a coprime pair $(p,q)$ with $\left|\frac{q}{p}\right|<4$ such that the index $\left [ G_{\frac{1}{p}} : G_{\frac{q}{p}} \right ] $ is infinite.
 One approach to establishing this is to prove that $G_{\frac{q}{p}}$ has no non-identity diagonal matrix.
 Let $D$ be the set of all diagonal matrices in $G_{\frac{1}{p}} = \SL2 \left( \ZZ[\frac{1}{p}] \right)$. Then $D$ is an infinite subgroup of $G_{\frac{1}{p}}$. 
 If $G_{\frac{q}{p}}$ is a finite index subgroup of $G_{\frac{1}{p}}$, then the intersection $D \cap G_{\frac{q}{p}}$ is a finite index subgroup of $D$, hence $D \cap G_{\frac{q}{p}}$ should be infinite.
 Therefore, if we prove that $G_{\frac{q}{p}}$ has no non-identity diagonal matrix, then $\left [ G_{\frac{1}{p}} : G_{\frac{q}{p}} \right ] = \infty $.
 This observation suggests that our strategy may be useful in disproving Conjecture~\ref{Carl-Conj}.
 We refer to \cite{hao2026groups} for readers who are interested in a partial answer to Conjecture~\ref{Carl-Conj}.

 We finish the section by posing the following problem concerning diagonal matrices.

\begin{ques} \label{Pro-Diagonal}
Let $\alpha=\frac{q}{p}$ be a rational number.
Then find an equivalent condition of $\alpha$ such that $G_{\alpha}$ contains a non-identity diagonal matrix.
\end{ques}

If Conjecture~\ref{Carl-Conj} is true, then the answer to the above question would be $|\alpha|<4$.


\section{Sequences of non-free rational numbers} \label{Sec-Seq to 3}
 The main purpose of this section is to show that $3$ is a limit point of non-free rational numbers. 
 We suggest real numbers that are limit points of non-free rational numbers.

\begin{prop} \label{LimPtOfRRN1}
Let $y, n_3, n_4 \in \ZZ - \{ 0 \}$, $n_5 \in \ZZ$ be integers such that
$$(s y + n_4 r)^2 - 8 n_3 n_4 r y$$
is either 0 or a non-square positive integer, where $r := 2 n_5 - 1$ and $s := 2 n_3 + 2 n_5 - 1$.
Then any of two real numbers
$$\frac{-(s y + n_4 r) \pm \sqrt{(s y + n_4 r)^2 - 8 n_3 n_4 r y}}{2 n_3 n_4 r y}$$
is a limit point of non-free rational numbers.
\end{prop}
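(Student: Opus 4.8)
The plan is to produce, for each of the two target numbers, a sequence of non-free rational numbers converging to it, where non-freeness of each term is certified by a single application of the orbit test (Proposition~\ref{Orbit_Test}) to a word whose exponents are controlled by the modified Pell equation of Lemma~\ref{PellLem1}. First I would take the word $W_t := A^{n_5}B_\alpha^{n_4}A^{n_3}B_\alpha^{y}A^{t}$, where $t\in\ZZ$ is the parameter to be varied. This is the word $A^{a_1}B_\alpha^{b_1}A^{a_2}B_\alpha^{b_2}A^{a_3}$ with $(a_1,b_1,a_2,b_2,a_3)=(n_5,n_4,n_3,y,t)$, so its entries are the polynomials $f_1,f_2,f_3$ in $\alpha$ already computed earlier in the paper. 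Since $W_t\cdot 0=f_2(\alpha)/f_3(\alpha)$, the orbit test is triggered exactly when $2f_2(\alpha)=f_3(\alpha)$. Writing $r=2n_5-1$ and $s=2n_3+2n_5-1$, this condition is the quadratic $P(t)\alpha^2+Q(t)\alpha+R(t)=0$ with $P(t)=n_3n_4ry\,t$, $Q(t)=n_3n_4r+(sy+n_4r)t$, and $R(t)=s+2t$. Consequently, for each $t$ with $P(t)\neq 0$, every real root $\alpha_0$ of this quadratic satisfies $W_t\cdot 0=\tfrac12$ in $G_{\alpha_0}$ and is therefore non-free, and such a root is rational exactly when the discriminant $\Delta(t):=Q(t)^2-4P(t)R(t)$ is a perfect square.

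Next I would compute $\Delta(t)$ explicitly as a quadratic polynomial in $t$, obtaining $\Delta(t)=D\,t^2+2n_3n_4r(n_4r-sy)\,t+(n_3n_4r)^2$, where $D=(sy+n_4r)^2-8n_3n_4ry$ is exactly the quantity under the radical in the statement. Three features make Lemma~\ref{PellLem1} directly applicable: the leading coefficient is $D$, which by hypothesis is $0$ or a non-square positive integer; the middle coefficient is even; and the constant term $(n_3n_4r)^2$ is a perfect square, so $(t,v)=(0,n_3n_4r)$ is a seed solution of $v^2=\Delta(t)$. Lemma~\ref{PellLem1} then yields infinitely many integers $t$ for which $\Delta(t)=v^2$ for some $v\in\ZZ$, and for each such $t\neq0$ the two roots $\alpha_\pm(t)=\frac{-Q(t)\pm v}{2P(t)}$ are non-free rational numbers.

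Finally I would let $t\to\infty$ along these solutions. From $P(t)\sim n_3n_4ry\,t$, $Q(t)\sim(sy+n_4r)t$, and $v=\sqrt{\Delta(t)}\sim\sqrt{D}\,|t|$, the roots satisfy
\[ \alpha_\pm(t)\ \longrightarrow\ \frac{-(sy+n_4r)\pm\sqrt{D}}{2n_3n_4ry}, \]
which are precisely the two numbers in the statement, the sign of $v$ selecting between them. This realizes each of them as a limit of non-free rational numbers. (When $D=0$ the two numbers coincide and $\Delta$ is linear, which is the degenerate case of Lemma~\ref{PellLem1}; the same argument applies.)

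The step I expect to be the main obstacle is arranging that the leading coefficient of $\Delta(t)$ is exactly the (generally non-square) number $D$. This is delicate, since the most natural variants fail. If one uses $g\cdot\infty=\tfrac12$ instead of $g\cdot0=\tfrac12$, the resulting quadratic has constant term identically $2$, so whichever single exponent is varied enters $Q$ linearly while $P$ enters only through a linear factor, and the discriminant $Q^2-8P$ has leading coefficient equal to the square of a linear coefficient — a perfect square — whence Lemma~\ref{PellLem1} produces only finitely many solutions. The reason the present choice works is that with the orbit value $0$ the constant term $R(t)=s+2t$ grows linearly in $t$, so $P(t)R(t)$ is genuinely quadratic and the term $-4P(t)R(t)$ lowers the leading coefficient from the perfect square $(sy+n_4r)^2$ coming from $Q(t)^2$ down to $D=(sy+n_4r)^2-8n_3n_4ry$. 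Once this is set up, the remaining points are routine: checking $f_3(\alpha_\pm(t))\neq0$ so that the image of $0$ is genuinely $\tfrac12$ rather than $\infty$, and checking that the $\alpha_\pm(t)$ are eventually distinct so the sequences are truly infinite and converge to the claimed limits.
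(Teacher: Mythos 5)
Your proposal is correct and follows essentially the same route as the paper's proof: the same word $A^{n_5}B_\alpha^{n_4}A^{n_3}B_\alpha^{y}A^{x}$ with the last exponent as the Pell parameter, the same quadratic $(n_3n_4rxy)\alpha^2+(sxy+n_4rx+n_3n_4r)\alpha+(2x+s)=0$ from the orbit test, the same discriminant with seed solution $(x,z)=(0,\pm n_3n_4r)$ fed into Lemma~\ref{PellLem1}, and the same limit $|x|\to\infty$. The extra checks you flag (the sign of $v$ selecting $\alpha_\pm$, non-vanishing of the denominator, distinctness of the solutions) are details the paper leaves implicit, and your observation about why $g\cdot 0=\tfrac12$ rather than $g\cdot\infty=\tfrac12$ makes the leading coefficient non-square is a nice complement consistent with Proposition~\ref{Converse_OT1}.
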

\begin{proof}
 By the orbit test (Proposition \ref{Orbit_Test}), every real number $\alpha$ satisfying
$$ A^{n_5} B_\alpha^{n_4} A^{n_3} B_\alpha^{n_2} A^{n_1} \cdot 0 = \frac{1}{2} $$
is non-free.
 We now adjust the parameters $n_1, n_2, n_3, n_4, n_5 \in \ZZ$ to make $\alpha$ rational.
We rewrite it as follows:
$$
\begin{bmatrix} 2 & -1 \end{bmatrix}
\begin{bmatrix} 1 & n_5 \\          0 & 1 \end{bmatrix}
\begin{bmatrix} 1 &   0 \\ n_4 \alpha & 1 \end{bmatrix}
\begin{bmatrix} 1 & n_3 \\          0 & 1 \end{bmatrix}
\begin{bmatrix} 1 &   0 \\ n_2 \alpha & 1 \end{bmatrix}
\begin{bmatrix} 1 & n_1 \\          0 & 1 \end{bmatrix}
\begin{bmatrix} 0 \\ 1 \end{bmatrix}
= 0.$$
Then we have
\begin{align}
\begin{aligned}
(2 n_1 n_2 n_3 n_4 n_5 - n_1 n_2 n_3 n_4) \alpha^2 \\
+ (2 n_1 n_2 n_3 + 2 n_1 n_2 n_5 + 2 n_1 n_4 n_5 + 2 n_3 n_4 n_5 - n_1 n_2 - n_1 n_4 - n_3 n_4)  \alpha \\
+ (2 n_1 + 2 n_3 + 2 n_5 - 1 )  = 0.
\end{aligned}
\end{align}

\noindent Now we consider $n_1$ and $n_2$ as variables, whereas $n_3$, $n_4$, and $n_5$ would be regarded as constants. To distinguish variables and constants, let $x := n_1 \in \ZZ$ and $y := n_2 \in \ZZ$.
For convenience, we put $r := 2 n_5 - 1 \in \ZZ$ and $s := 2 n_3 + 2 n_5 - 1 \in \ZZ$.
After the substitutions, we have
$$(n_3 n_4 r x y) \alpha^2 + (s x y + n_4 r x + n_3 n_4 r) \alpha + (2 x + s) = 0.$$

\noindent Now suppose $x$, $y$, $n_3$, and $n_4$ are nonzero. Then $n_3 n_4 r x y \neq 0$ so the above equation is quadratic for $\alpha$. Let $\alpha_{\pm}$ be two roots of this quadratic equation, namely,
\begin{align} \label{alpha_pm}
\alpha_\pm := \frac{-(s x y + n_4 r x + n_3 n_4 r) \pm \sqrt{(s x y + n_4 r x + n_3 n_4 r)^2 - 4 (n_3 n_4 r x y) (2 x + s)}}{2 n_3 n_4 r x y}.
\end{align}

\noindent Thus, when the value $$ (s x y + n_4 r x + n_3 n_4 r)^2 - 4 (n_3 n_4 r x y) (2 x + s) $$ is a square integer, $\alpha_{\pm}$ is rational.
 To find such square integers, consider the following Diophantine equation 
\begin{align} \label{LemInPell1}
        z^2 = (s x y + n_4 r x + n_3 n_4 r)^2 - 4 (n_3 n_4 r x y) (2 x + s),
\end{align}

\noindent where $z \in \ZZ$. By arranging the equation with respect to $x$, we get
$$z^2 = \left( (s y + n_4 r)^2 - 8 n_3 n_4 r y \right) x^2 + \left( 2 n_3 n_4 r (-s y + n_4 r) \right) x + (n_3 n_4 r)^2.$$ 

\noindent By the assumption, the value $(s y + n_4 r)^2 - 8 n_3 n_4 r y$ is either $0$ or a non-square positive integer.
 Notice that $2 n_3 n_4 r (-s y + n_4 r)$ is even, and $(x, z) = (0, \pm n_3 n_4 r)$ provide examples of solutions of Diophantine equation \eqref{LemInPell1}. 
 By Lemma \ref{PellLem1}, we conclude that there are infinitely many $x \in \ZZ$ satisfying Equation \eqref{LemInPell1}.
Therefore, we can take the limit $\lim_{|x| \to \infty} \alpha_\pm$, and we conclude that 
the real numbers $$\lim_{|x| \to \infty} \alpha_\pm = \frac{-(s y + n_4 r) \pm \sqrt{(s y + n_4 r)^2 - 8 n_3 n_4 r y}}{2 n_3 n_4 r y}$$ are indeed limit points of non-free rational numbers.
\end{proof}

\begin{rmk}
\normalfont
 From Proposition~\ref{LimPtOfRRN1}, we can recover Smilga's result in \cite{smilga2021new}.
 Namely, putting $n_3 = -1, n_4 = 1$, and $n_5 = 0$, we obtain
 $$\lim_{|x| \to \infty} \alpha_\pm = \frac{(3 y + 1) \pm \sqrt{(3y+1)^2 - 8 y}}{2 y}.$$
 Then, $y=1$ yields $2 \pm \sqrt{2}$ so $2+\sqrt{2}$ is a limit point of non-free rational numbers.
\end{rmk}

 The next lemma tells us that the condition of $(s y + n_4 r)^2 - 8 n_3 n_4 r y$ is not strict.

\begin{lem} \label{LimPtOfRRN1+}
Let $n_3, n_4 \in \ZZ - \{ 0 \}$, $n_5 \in \ZZ$ be integers, and put $r := 2 n_5 - 1$ and $s := 2 n_3 + 2 n_5 - 1.$ Then
$$(s y + n_4 r)^2 - 8 n_3 n_4 r y$$
is a non-square positive integer for all but finitely many $y \in \ZZ - \{ 0 \}$.
\end{lem}
\begin{proof}
 We regard $n_3$, $n_4$, and $n_5$ as constants.
 By arranging the equation with respect to $y$, we get
$$(s y + n_4 r)^2 - 8 n_3 n_4 r y = s^2 y^2 + 2 n_4 r (s - 4 n_3) y + (n_4 r)^2.$$
Since $s = 2 n_3 + 2 n_5 - 1$ is an odd integer, the coefficient $s^2$ of $y^2$ is positive.
Therefore, $s^2 y^2 + 2 n_4 r (s - 4 n_3) y + (n_4 r)^2$ is a positive integer for infinitely many $y \in \ZZ - \{ 0 \}$.

Now suppose that for given $y$, $s^2 y^2 + 2 n_4 r (s - 4 n_3) y + (n_4 r)^2$ is a square of an integer $w \in \ZZ$. 
That is, for $y \in \ZZ - \{ 0 \}$, there exists $w \in \ZZ$ satisfying
\begin{align} \label{LemInPell2}
        w^2 = s^2 y^2 + 2 n_4 r (s - 4 n_3) y + (n_4 r)^2.
\end{align}

Then we have
\begin{align}
\begin{aligned}
& w^2 = s^2 y^2 + 2 n_4 r (s - 4 n_3) y + (n_4 r)^2 \\
&\iff s^2 w^2 = s^4 y^2 + 2 n_4 r s^2 (s - 4 n_3) y + (n_4 r s)^2 \\
&\iff (s w)^2 = \left( s^2 y + n_4 r(s-4 n_3) \right)^2 - n_4^2 r^2(s-4n_3)^2 + (n_4 r s)^2  \\
&\iff \left( s w + s^2 y + n_4 r (s - 4 n_3) \right) \left( s w - s^2 y - n_4 r (s - 4 n_3) \right) = 8 n_3 (n_4 r)^2 (s - 2 n_3) \\
&\iff \left( s w + s^2 y + n_4 r (s - 4 n_3) \right) \left( s w - s^2 y - n_4 r (s - 4 n_3) \right) = 8 n_3 n_4^2 r^3.
\end{aligned}
\end{align}
The last equivalence follows from $s - 2 n_3 = 2 n_5 - 1 = r$.
Thus, whenever we find $w, y \in \ZZ$ satisfying \eqref{LemInPell2}, we confirm that $s w \pm (s^2 y + n_4 r (s - 4 n_3))$ are divisors of the nonzero integer $8 n_3 n_4^2 r^3$.
Since $8 n_3 n_4^2 r^3 = 8 n_3 n_4^2 (2 n_5 - 1)^3$ is fixed, there are only finitely many divisors of $8 n_3 n_4^2 r^3$.
However, in our factorization of $8 n_3 n_4^2 r^3$, the difference between two divisors is 
$$ 2 s^2 y + 2 n_4 r (s - 4 n_3),$$
which depends on $y$.
 Therefore, for given $n_3$, $n_4$, and $n_5$, there are only finitely many integer pairs $(y,w) \in \ZZ^2$ satisfying Equation \eqref{LemInPell2}.
\end{proof}

 Now we give rational numbers that are limit points of non-free rational numbers.

\begin{thm} \label{LimPtOfRRN2}
Let $n_3, n_4 \in \ZZ - \{ 0 \}$, $n_5 \in \ZZ$ be integers, and put $r := 2 n_5 - 1$ and $s := 2 n_3 + 2 n_5 - 1$.
Then a rational number
$$-\frac{s}{n_3 n_4 r} = -\frac{2 n_3 + 2 n_5 - 1}{n_3 n_4 (2 n_5 - 1)}$$
is a limit point of limit points of non-free rational numbers.
\end{thm}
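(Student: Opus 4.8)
The plan is to realize the rational number $-\frac{s}{n_3 n_4 r}$ as a limit of the limit points produced by Proposition~\ref{LimPtOfRRN1}. Recall that for fixed $n_3, n_4, n_5$ (hence fixed $r$ and $s$), and for each admissible $y \in \ZZ - \{0\}$, Proposition~\ref{LimPtOfRRN1} gives that
$$\beta_\pm(y) := \frac{-(s y + n_4 r) \pm \sqrt{(s y + n_4 r)^2 - 8 n_3 n_4 r y}}{2 n_3 n_4 r y}$$
is a limit point of non-free rational numbers, provided the discriminant $(s y + n_4 r)^2 - 8 n_3 n_4 r y$ is either $0$ or a non-square positive integer. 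By Lemma~\ref{LimPtOfRRN1+}, this discriminant condition holds for all but finitely many $y \in \ZZ - \{0\}$, so $\beta_\pm(y)$ is a genuine limit point of non-free rational numbers for all sufficiently large $|y|$. The strategy is therefore to take $|y| \to \infty$ and show that $\beta_+(y)$ (say) converges to $-\frac{s}{n_3 n_4 r}$; this exhibits the target as a limit of limit points of non-free rational numbers, which is exactly the claim.

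First I would carry out the asymptotic analysis of $\beta_+(y)$ as $|y| \to \infty$. The dominant term inside the square root is $s^2 y^2$, so I would factor $|y|$ (equivalently $|sy|$, using $s \neq 0$ since $s$ is odd) out of the radical and expand. Writing the discriminant as $s^2 y^2 + 2 n_4 r(s - 4 n_3) y + (n_4 r)^2$ (the rearrangement already recorded in the proof of Lemma~\ref{LimPtOfRRN1+}), one has $\sqrt{\text{disc}} = |s y|\sqrt{1 + O(1/y)} = |s y| + O(1)$ as $|y| \to \infty$. The key computation is to track which sign of the root cancels the leading $-s y$ in the numerator: for the branch where the $\pm$ sign aligns with the sign of $sy$, the numerator $-(sy + n_4 r) \pm \sqrt{\text{disc}}$ has its $O(y)$ terms cancel, leaving an $O(1)$ numerator over the $O(y)$ denominator $2 n_3 n_4 r y$, which tends to $0$; for the other branch the numerator is $\sim -2 s y$, and dividing by $2 n_3 n_4 r y$ gives the limit $-\frac{s}{n_3 n_4 r}$. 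So I would select the branch yielding the nonzero limit and confirm it equals $-\frac{s}{n_3 n_4 r} = -\frac{2 n_3 + 2 n_5 - 1}{n_3 n_4 (2 n_5 - 1)}$.

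The main obstacle, and the point requiring care, is the sign bookkeeping in the square root expansion: $\sqrt{s^2 y^2 + \cdots}$ behaves like $|s||y|$, not $s y$, so the clean cancellation depends on matching the $\pm$ branch to the signs of $s$ and $y$, and one must be attentive to whether $y \to +\infty$ or $y \to -\infty$. Rather than chase signs case by case, a cleaner route is to restrict to $y \to +\infty$ (the admissible set of $y$ is cofinite by Lemma~\ref{LimPtOfRRN1+}, so infinitely many positive $y$ qualify) and fix the branch of the $\pm$ accordingly; then the limit $-\frac{s}{n_3 n_4 r}$ is obtained by a direct computation. One should also note in passing that $r = 2n_5 - 1 \neq 0$ and $s$ is odd hence nonzero, so the expression $-\frac{s}{n_3 n_4 r}$ is a well-defined nonzero rational number and the denominators never vanish.

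To close the argument, I would invoke a standard diagonal/limit-of-limits principle: since each $\beta_+(y)$ (for $y$ in the cofinite admissible set) is a limit point of non-free rational numbers, and since $\beta_+(y) \to -\frac{s}{n_3 n_4 r}$ as $y \to \infty$, the target value is a limit point of the set $\{\beta_+(y)\}_y$, hence a limit point of limit points of non-free rational numbers. This matches the phrasing of the statement precisely. The entire proof is thus short: it is essentially the $|y| \to \infty$ limit of the formula in Proposition~\ref{LimPtOfRRN1}, justified by Lemma~\ref{LimPtOfRRN1+} to guarantee that infinitely many $y$ yield valid limit points.
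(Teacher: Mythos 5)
Your proposal is correct and follows essentially the same route as the paper's own proof: invoke Proposition~\ref{LimPtOfRRN1} to get limit points $\beta_\pm(y)$ for the (cofinitely many, by Lemma~\ref{LimPtOfRRN1+}) admissible $y$, then let $|y|\to\infty$ and pick the branch of the square root whose leading terms do not cancel, yielding $-\frac{s}{n_3 n_4 r}$ as a limit of limit points. Your sign bookkeeping (that $\sqrt{s^2y^2+\cdots}$ behaves like $|sy|$, so the correct $\pm$ branch must be matched to the sign of $sy$) is in fact slightly more careful than the paper's formula $\frac{-s \pm \sqrt{s^2}}{2 n_3 n_4 r}$, which glosses over this point by simply selecting "one of the limits."
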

\begin{proof}
Fix $n_3, n_4 \in \ZZ - \{ 0 \}$, $n_5 \in \ZZ$, and choose $y \in \ZZ$ be a nonzero integer satisfying $$(s y + n_4 r)^2 - 8 n_3 n_4 r y$$
is either 0 or a non-square positive integer using Lemma \ref{LimPtOfRRN1+}.
Then by Proposition \ref{LimPtOfRRN1}, the real number
$$\lim_{|x| \to \infty} \alpha_\pm = \frac{-(s y + n_4 r) \pm \sqrt{(s y + n_4 r)^2 - 8 n_3 n_4 r y}}{2 n_3 n_4 r y} $$
is a limit point of non-free rational numbers. Recall that $\alpha_\pm$ is given in Formula \eqref{alpha_pm}.

We again use Lemma \ref{LimPtOfRRN1+} to choose infinitely many $y_k \in \ZZ$ so that $(s y_k + n_4 r)^2 - 8 n_3 n_4 r y_k$ is either $0$ or a non-square positive integer.
Thus, we can take the limit of the values
\begin{align} 
(L_k)_\pm := \lim_{|x| \to \infty} \alpha_\pm = \frac{-(s y_k + n_4 r) \pm \sqrt{(s y_k + n_4 r)^2 - 8 n_3 n_4 r y_k}}{2 n_3 n_4 r y_k}. 
\end{align}
Letting $k \to \infty$, observe 
$$\lim_{|y| \to \infty} (L_k)_\pm = \frac{-s \pm \sqrt{s^2}}{2 n_3 n_4 r},$$ 
so one of the limit is $$ L := -\frac{s}{n_3 n_4 r}.$$
Since $(L_k)_{\pm}$ is a limit point of non-free rational numbers, we obtain that the number 
$$L = -\frac{s}{n_3 n_4 r} = -\frac{2 n_3 + 2 n_5 - 1}{n_3 n_4 (2 n_5 - 1)}$$
turns out to be a limit point of limit points of non-free rational numbers.
In other words, $L$ is a limit point of non-free rational numbers.
\end{proof}

 This theorem immediately shows the following.

\begin{cor} \label{LimPtOfRRN3}
3 is a limit point of non-free rational numbers.
\end{cor}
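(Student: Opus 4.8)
The corollary states that $3$ is a limit point of non-free rational numbers. Having Theorem~\ref{LimPtOfRRN2} available, the natural plan is to realize $3$ as a value of the explicit rational expression
$$-\frac{s}{n_3 n_4 r} = -\frac{2 n_3 + 2 n_5 - 1}{n_3 n_4 (2 n_5 - 1)}$$
for a suitable integer choice of the parameters $n_3, n_4 \in \ZZ - \{0\}$ and $n_5 \in \ZZ$. Since the theorem guarantees that every such expression is a limit point of non-free rational numbers, it suffices to exhibit one legal parameter triple producing the value $3$.

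**The plan.** First I would treat the equation $-\dfrac{2 n_3 + 2 n_5 - 1}{n_3 n_4 (2 n_5 - 1)} = 3$ as a Diophantine problem and search for the simplest integer solution, keeping in mind the constraints $n_3 \neq 0$, $n_4 \neq 0$, $n_5 \in \ZZ$, and $r = 2 n_5 - 1 \neq 0$ (the latter holds automatically since $r$ is odd). A convenient reduction is to try small values of $n_5$ to fix $r$, then solve for $n_3, n_4$. For instance, taking $n_5 = 0$ gives $r = -1$ and $s = 2 n_3 - 1$, so the expression becomes $-\dfrac{2 n_3 - 1}{-n_3 n_4} = \dfrac{2 n_3 - 1}{n_3 n_4}$; setting this equal to $3$ yields $2 n_3 - 1 = 3 n_3 n_4$, and I would hunt for an integer pair $(n_3, n_4)$ solving it. One then verifies directly that the chosen triple satisfies all the nonvanishing hypotheses, so that Theorem~\ref{LimPtOfRRN2} applies and certifies $3$ as a limit point of non-free rational numbers.

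**Carrying it out.** Concretely, I would record a clean witness. Trying to force $n_4 = \pm 1$ keeps the algebra minimal: with $n_5 = 0$ and $n_4 = 1$ the relation $2 n_3 - 1 = 3 n_3$ gives $n_3 = -1$, which is a valid nonzero integer. Substituting $n_3 = -1$, $n_4 = 1$, $n_5 = 0$ back into the formula gives $r = -1$, $s = -3$, and
$$-\frac{s}{n_3 n_4 r} = -\frac{-3}{(-1)(1)(-1)} = -\frac{-3}{-1} = 3,$$
so the value is exactly $3$. Since all of $n_3, n_4$ are nonzero and $n_5 \in \ZZ$, the hypotheses of Theorem~\ref{LimPtOfRRN2} are met, and the conclusion is immediate.

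**Where the difficulty lies.** Honestly, with Theorem~\ref{LimPtOfRRN2} in hand the only real work is the bookkeeping of finding one admissible parameter triple, so there is no deep obstacle; the statement reduces to a single substitution. The substantive mathematics sits upstream in Proposition~\ref{LimPtOfRRN1} and Lemma~\ref{LimPtOfRRN1+}, where the orbit test and the modified Pell equation (Lemma~\ref{PellLem1}) do the heavy lifting to guarantee infinitely many rational limit points. Thus my proof would simply invoke the theorem with the explicit witness and confirm the arithmetic, noting that the existence of a valid triple is the whole content of this corollary.
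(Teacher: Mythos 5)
Your proposal is correct and takes essentially the same approach as the paper, whose entire proof is to take $n_3 = -1$, $n_4 = 1$, $n_5 = 0$ in Theorem \ref{LimPtOfRRN2} --- exactly the witness you found. (One harmless slip in your intermediate display: $n_3 n_4 r = (-1)(1)(-1) = +1$, so the computation should read $-\tfrac{-3}{1} = 3$; your two sign errors cancel and the conclusion stands.)
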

\begin{proof}
Take $n_3 = -1$, $n_4 = 1$, and $n_5 = 0$ in Theorem \ref{LimPtOfRRN2}.
\end{proof}

\noindent From our contribution, the following natural problem arises.

\begin{ques}
 Can we construct a sequence of non-free rational numbers that converges to $4$? 
\end{ques}

\noindent We emphasize that $4$ is a limit point of non-free algebraic integer numbers \cite[Theorem 6.1]{MR4237486}.
 But still, our understanding of non-free rational numbers remains fairly limited.
In particular, Conjecture \ref{M-Conj} is still open, and even the following conjecture is far from being well understood.

\begin{conj} \label{D-Conj}
 Find a dense subset $S \subset \QQ \cap (-4,4)$ consisting of non-free numbers.
\end{conj}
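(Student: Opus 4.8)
The plan is to reduce the construction of a dense set of non-free rationals to a density statement about their \emph{limit points}, and then to feed the orbit test the most flexible possible words. First I would record the following elementary reduction. Let $\mathcal{L} \subseteq (-4,4)$ denote the set of all reals that are limits of sequences of non-free rationals lying in $(-4,4)$, and let $S$ be the set of all non-free rationals in $(-4,4)$. If $\mathcal{L}$ is dense in $(-4,4)$, then $S$ is dense in $(-4,4)$: given a target $\tau \in (-4,4)$ and $\epsilon > 0$, choose $p \in \mathcal{L}$ with $|p - \tau| < \epsilon/2$; since $p$ is an interior point that is a limit of non-free rationals from $(-4,4)$, there is a non-free rational $\beta \in (-4,4)$ with $|\beta - p| < \epsilon/2$, whence $\beta \in S$ and $|\beta - \tau| < \epsilon$. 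Thus it suffices to produce a family of limit points of non-free rationals that is dense in $(-4,4)$, and Proposition \ref{LimPtOfRRN1} and Theorem \ref{LimPtOfRRN2} already manufacture such limit points in bulk.

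The engine for producing limit points is the orbit test applied to alternating words. For each choice of exponents the equation $A^{n_k} B_\alpha^{n_{k-1}} \cdots B_\alpha^{n_2} A^{n_1} \cdot 0 = \tfrac12$ becomes a polynomial equation $P(\alpha) = 0$ whose coefficients are multilinear integer polynomials in the $n_i$, and by Proposition \ref{Orbit_Test} every rational root in $(-4,4)$ is non-free. Designating some of the $n_i$ as Pell variables, Lemma \ref{PellLem1} guarantees that infinitely many of these roots are rational, and letting a Pell variable tend to infinity (exactly as in Proposition \ref{LimPtOfRRN1}) sends such rational roots to a limit point; iterating the limit, as in Theorem \ref{LimPtOfRRN2}, produces limit points of limit points. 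The strategy is therefore to choose the word length $k$ and the roles of the parameters so that the resulting set of limit points fills $(-4,4)$ densely. I would first try to show that, for a fixed short word, the roots of the limiting quadratic $a\,t^2 + b\,t + 2 = 0$ with admissible coefficient pairs $(a,b)$ already approximate any target: for $\tau \neq 0$ one wants $a$ large and $b \approx -a\tau$, which forces the root within $O(1/a)$ of $\tau$ while keeping the discriminant $b^2 - 8a$ positive and non-square, so that Lemma \ref{PellLem1} applies.

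The main obstacle is precisely that the explicit families produced so far are \emph{not} dense. Writing $L = -\tfrac{1}{n_4}\bigl(\tfrac{2}{2n_5-1} + \tfrac{1}{n_3}\bigr)$ for the limit points of Theorem \ref{LimPtOfRRN2}, one sees that $|L| \le 3$ with equality only at $\pm 3$, and that the values cluster near the discrete set $\{2/(2n_5-1)\} \cup \{0\}$ rather than spreading out; so this family alone covers neither a neighborhood of $\pm 4$ nor a dense subset of $[-3,3]$. Overcoming this requires genuinely longer words, where the extra free exponents must place a root of a high-degree integer polynomial near an arbitrary target while simultaneously maintaining the Pell solvability that makes infinitely many roots rational. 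This is a Diophantine approximation problem coupled to a solvability constraint, and I expect it to be the crux: the coupling between ``a root near $\tau$'' and ``the relevant discriminant is $0$ or a non-square positive integer'' is exactly what is hard to control uniformly in $\tau$. The regions near $\pm 3$ and $\pm 4$ should be the worst, consistent with the fact that relation lengths in $G_\alpha$ diverge as $\alpha \to 4$.

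Finally, since $\alpha$ is non-free if and only if $-\alpha$ is (conjugating $A, B_\alpha$ by $\mathrm{diag}(1,-1)$ gives an isomorphism $G_\alpha \cong G_{-\alpha}$), it suffices to obtain density on $(0,4)$. Where the orbit test is too weak, especially near the endpoint $4$, I would supplement it with the other non-freeness criteria in the literature, such as the generalized Chebyshev polynomial characterization of \cite{choi2025non} and the structural results of \cite{gilman2008structure}, using them to patch in any subintervals that the orbit-test families miss. A full resolution would combine a dense orbit-test family on the bulk of $(-4,4)$ with these auxiliary criteria near the endpoints.
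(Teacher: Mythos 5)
You were asked to prove a statement that the paper does not prove: Conjecture \ref{D-Conj} is posed as an open problem, and the authors state explicitly that they do not know the answer --- indeed, even a single sequence of non-free rationals converging to $4$ is unknown (the paper poses this as a separate open question). So there is no proof in the paper to compare against, and your submission, as you yourself concede at several points, is a research program rather than a proof. Note also that your opening reduction is circular: density in $(-4,4)$ of the set $\mathcal{L}$ of limit points of non-free rationals is \emph{equivalent} to density of the set $S$ of non-free rationals (every point of $\mathcal{L}$ is by definition approximated by points of $S$, and conversely if $S$ is dense then $\mathcal{L} = (-4,4)$), so passing to $\mathcal{L}$ restates the problem rather than reducing it.

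The concrete gap is in the engine, and your own third paragraph locates it correctly but does not close it. You verified, correctly, that the limit points of Theorem \ref{LimPtOfRRN2} satisfy $|L| \leq 3$ and cluster only along a countable, nowhere dense set, so the paper's machinery (length-five words, the quadratic \eqref{alpha_pm}, Lemma \ref{PellLem1}) cannot give density as it stands. Your proposed fix --- longer alternating words --- faces an obstruction you do not address: for words of length $\geq 7$ the orbit equation $g \cdot 0 = \tfrac{1}{2}$ is a polynomial of degree $\geq 3$ in $\alpha$, so rationality of its roots is no longer governed by a discriminant being a perfect square, and Lemma \ref{PellLem1} (which requires an expression \emph{quadratic} in the Pell variable with leading coefficient $0$ or a non-square positive integer) simply does not apply; if instead you freeze exponents to keep the equation quadratic in $\alpha$, you are back inside the family whose closure you already showed is too small. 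Near $\pm 4$ the failure is structural: relation lengths in $G_\alpha$ diverge as $\alpha \to 4$ \cite{gutan2014diophantine}, so orbit-test witnesses must grow without bound and no finite list of word shapes can supply non-free rationals densely near the endpoints. Finally, the proposed patching with \cite{kim2022non}, \cite{choi2025non}, and \cite{gilman2008structure} cannot supply density either: those criteria cover numerators or denominators in a fixed finite range, and such sets of rationals are nowhere dense away from $0$. Every known family of non-free rationals accumulates at only countably many points; bridging from that to a dense set is exactly the open content of Conjecture \ref{D-Conj}, and nothing in your sketch does it.
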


\noindent Conjecture \ref{D-Conj} would be useful to treat Conjecture \ref{M-Conj}, but still, we do not know the answer.
Furthermore, we do not know any examples of open intervals $(a,b) \subset (-4,4)$ such that $\alpha$ is non-free when $\alpha \in (a,b) \cap \QQ$.


\section{Explicit sequences of non-free rational numbers converging to $3$} \label{Sec-ExplicitSeqTo3}

In this section, we explicitly construct the sequence of non-free rational numbers converging to $3$. We start with the following lemma, which is a refinement of Lemma \ref{LimPtOfRRN1+} for the $(n_3, n_4, n_5) = (-1, 1, 0)$ case.
Recall $r := 2 n_5 - 1 = -1$ and $s := 2 n_3 + 2 n_5 - 1 = -3$.

\begin{lem} \label{EqnSolvingLem1}
For every nonzero $y \in \ZZ$,
$$(s y + n_4 r)^2 - 8 n_3 n_4 r y = 9 y^2 - 2 y + 1$$
is a non-square positive integer.
\end{lem}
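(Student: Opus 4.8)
The plan is to prove the two claims---positivity and non-squareness---directly, rather than by invoking Lemma \ref{LimPtOfRRN1+}, since that lemma only guarantees a non-square value for all but finitely many $y$, whereas here the conclusion is asserted for \emph{every} nonzero $y$. First I would substitute the parameters $(n_3,n_4,n_5)=(-1,1,0)$, so that $r=-1$ and $s=-3$, giving $sy+n_4r=-3y-1$ and $8n_3n_4ry=8y$; expanding confirms
$$(sy+n_4r)^2-8n_3n_4ry=(3y+1)^2-8y=9y^2-2y+1,$$
which matches the stated expression.

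Positivity is immediate: as a real quadratic in $y$, the polynomial $9y^2-2y+1$ has positive leading coefficient and discriminant $(-2)^2-4\cdot9=-32<0$, so it is strictly positive for all real $y$, and in particular is a positive integer for every $y\in\ZZ$.

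For non-squareness the plan is the elementary method of squeezing the value strictly between two consecutive perfect squares, handled in two cases according to the sign of $y$. For $y\geq1$ I would compare with $(3y-1)^2$ and $(3y)^2$, using
$$9y^2-2y+1-(3y-1)^2=4y>0 \quad\text{and}\quad (3y)^2-(9y^2-2y+1)=2y-1>0,$$
so that $(3y-1)^2<9y^2-2y+1<(3y)^2$. For $y\leq-1$, writing $y=-m$ with $m\geq1$ turns the quantity into $9m^2+2m+1$, and comparison with $(3m)^2$ and $(3m+1)^2$ via
$$9m^2+2m+1-(3m)^2=2m+1>0 \quad\text{and}\quad (3m+1)^2-(9m^2+2m+1)=4m>0$$
gives $(3m)^2<9m^2+2m+1<(3m+1)^2$. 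In each case the value lies strictly between the squares of consecutive integers and therefore cannot itself be a perfect square.

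There is no serious obstacle here; the only point requiring attention is choosing, for each sign of $y$, the correct bracketing integers, which differ between the two cases ($3y-1,3y$ for $y\geq1$ versus $3|y|,3|y|+1$ for $y\leq-1$)---precisely the asymmetry forced by the linear term $-2y$. I note finally that the hypothesis $y\neq0$ is genuinely necessary, since at $y=0$ the expression equals $1=1^2$, a perfect square.
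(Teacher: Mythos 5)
Your proof is correct, and it takes a genuinely different route from the paper. The paper argues by contradiction through a factorization: supposing $w^2 = 9y^2-2y+1$, it multiplies by $9$ and completes the square to get $(3w)^2 = (9y-1)^2+8$, hence $(3w+9y-1)(3w-9y+1)=8$, and then enumerates the divisor pairs of $8$ whose sum is a multiple of $6$ to force $w=\pm 1$ and therefore $y=0$. You instead squeeze the value strictly between consecutive perfect squares, with the case split on the sign of $y$ that the linear term $-2y$ forces: $(3y-1)^2 < 9y^2-2y+1 < (3y)^2$ for $y\geq 1$, and $(3|y|)^2 < 9y^2-2y+1 < (3|y|+1)^2$ for $y\leq -1$; all four inequality verifications you give are arithmetically correct. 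Your approach is shorter and avoids the divisor enumeration entirely, and it makes visible exactly which squares bracket the value; its one cost is the two-case bookkeeping. The paper's factorization method, by contrast, is the same technique used in its Lemma \ref{LimPtOfRRN1+} (of which this lemma is the specialization to $(n_3,n_4,n_5)=(-1,1,0)$), so it keeps the two proofs stylistically parallel and pins down the exceptional set exactly ($w=\pm1$, $y=0$) rather than merely bounding the value away from squares. Your closing observation that $y=0$ gives $1=1^2$, so the hypothesis $y\neq 0$ is sharp, is a nice touch that the paper's proof also recovers implicitly.
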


\begin{proof}
We have $9 y^2 - 2 y + 1 > 0$ for any integer $y$. Now suppose that there is an integer $w \in \ZZ$ such that
$$w^2 = 9 y^2 - 2 y + 1$$
for some integer $y \in \ZZ$. Then we can multiply both sides by 9 and rewrite the equation as follows:
$$(3 w)^2 = (9 y - 1)^2 + 8$$
$$\iff (3 w + 9 y - 1) (3 w - 9 y + 1) = 8.$$
Since the sum of $3 w + 9 y - 1$ and $3 w - 9 y + 1$ is $6 w$, $3 w \pm (9 y - 1)$ would be integers whose product is 8 and sum is a multiple of 6. 
Thus, one of $3 w \pm (9 y - 1)$ is $\pm 2$, and the other one is $\pm 4$. 
Hence, their sum must be $\pm 6$, meaning that $w$ should be $\pm 1$. 
Therefore, $y \in \ZZ$ satisfies $9 y^2 - 2 y + 1 = w^2 = 1$, which implies $y = 0$. We conclude that for every $y \in \ZZ - \{ 0 \}$, $9 y^2 - 2 y + 1$ is a non-square positive integer.
\end{proof}

In Section~\ref{Sec-Seq to 3}, we proved that there exists a sequence of non-free rational numbers converging to 3. The following theorem provides two \textit{explicit} sequences of non-free rational numbers converging to 3.

\begin{thm} \label{ExplicitSeqsOfRRNConvTo3}
For any integer $n \in \ZZ$, the rational numbers
$$\frac{3 (18 n - 1)}{2 (9 n - 1)} = 3 + \frac{3}{2 (9 n - 1)},$$
$$\frac{162 n^2 + 162 n + 41}{3 (2 n + 1) (9 n + 4)} = 3 + \frac{9 n + 5}{3 (2 n + 1) (9 n + 4)}$$
are non-free. 
\end{thm}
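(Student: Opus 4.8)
The plan is to certify non-freeness directly through the orbit test (Proposition~\ref{Orbit_Test}): for each listed rational $\alpha$ it suffices to exhibit a single element $g \in G_\alpha$ with $g \cdot 0 = \frac{1}{2}$. I would hunt for $g$ among the words appearing in Proposition~\ref{LimPtOfRRN1}, specialized to $(n_3, n_4, n_5) = (-1, 1, 0)$ — the same specialization that underlies Lemma~\ref{EqnSolvingLem1} — that is,
$$ g = B_\alpha A^{-1} B_\alpha^{\,y} A^{\,x}, \qquad x = n_1,\ y = n_2 \in \ZZ. $$
Repeating the matrix computation of Proposition~\ref{LimPtOfRRN1}, the condition $g \cdot 0 = \frac{1}{2}$ is equivalent to the quadratic relation
$$ xy\,\alpha^2 - (3xy + x - 1)\,\alpha + (2x - 3) = 0, $$
whose roots are the $\alpha_\pm$ of Formula~\eqref{alpha_pm} with discriminant $\Delta = (9y^2 - 2y + 1)x^2 + (6y - 2)x + 1$. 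Thus the entire task reduces to producing, for every $n \in \ZZ$, integers $x(n), y(n)$ that make the prescribed value of $\alpha$ a root of this quadratic.

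I expect the main obstacle to be precisely the discovery of the correct parametrization $(x(n), y(n))$; once it is in hand the rest is bookkeeping. My strategy is to regard $y$ as free and solve the quadratic linearly,
$$ y = \frac{x(\alpha - 2) - (\alpha - 3)}{x\,\alpha(\alpha - 3)}, $$
and then to search for the simplest linear choice $y = y(n)$ that forces $x$ to be an integer. Since both target families satisfy $\alpha - 3 = O(1/n)$, the leading behavior of this expression is like $\frac{1}{3(\alpha - 3)} \approx 2n$, which makes $y = 2n$ and $y = 2n+1$ the natural candidates. Carrying this out, I anticipate the denominator of the resulting formula for $x$ to collapse (to $2$ for the first family, to $2n+1$ for the second), leaving the closed forms
$$ \text{Family 1:}\quad y = 2n,\ \ x = -3(9n - 1); \qquad \text{Family 2:}\quad y = 2n + 1,\ \ x = -3(9n + 4)(9n + 5). $$

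It then remains to verify, by direct substitution, that these choices make $\Delta$ a perfect square and that the corresponding root of the quadratic is the listed number. For Family~1 one checks $\Delta = (162n^2 - 27n + 2)^2$, and for Family~2 the analogous (longer) computation gives $\Delta = (1458n^3 + 2106n^2 + 1026n + 169)^2$; in both cases substituting back into $\alpha_\pm$ and simplifying recovers exactly $3 + \frac{3}{2(9n-1)}$ and $3 + \frac{9n+5}{3(2n+1)(9n+4)}$. Lemma~\ref{EqnSolvingLem1} guarantees that the leading coefficient $9y^2 - 2y + 1$ is a non-square positive integer, which is what places us in the genuine Pell regime of Lemma~\ref{PellLem1} rather than among accidental sporadic solutions. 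Finally, each $g = B_\alpha A^{-1} B_\alpha^{\,y} A^{\,x}$ is manifestly an element of $G_\alpha$, and the quadratic relation is exactly the statement $g \cdot 0 = \frac{1}{2}$ (no nondegeneracy of $x, y$ is needed for this), so the orbit test immediately yields that every such $\alpha$ is non-free. The only real labor is the polynomial identity in the discriminant check; the substitutions themselves are routine once the parametrization above is fixed.
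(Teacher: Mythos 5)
Your proposal is correct and follows essentially the same route as the paper: the same orbit-test word $B_\alpha A^{-1}B_\alpha^{y}A^{x}$ with $(n_3,n_4,n_5)=(-1,1,0)$, the identical parametrizations $(x,y)=(-3(9n-1),\,2n)$ and $(-3(9n+4)(9n+5),\,2n+1)$ (the paper states these in terms of $y$ as $x=-\tfrac{3}{2}(9y-2)$ and $x=-\tfrac{3}{4}(9y-1)(9y+1)$), and the same verification that the discriminant $(9y^2-2y+1)x^2+(6y-2)x+1$ is a perfect square, with your squares $(162n^2-27n+2)^2$ and $(1458n^3+2106n^2+1026n+169)^2$ agreeing with the paper's $\tfrac14(81y^2-27y+4)^2$ and $\tfrac{1}{16}(729y^3-81y^2+27y+1)^2$ after substitution. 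The only cosmetic differences are your discovery heuristic for the parametrization and your handling of the degenerate case $n=0$ in the first family via the linear relation (the paper instead cites that $\tfrac32$ is already known non-free), both of which are sound.
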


\begin{proof}
We start with the same argument as in the proof of Proposition \ref{LimPtOfRRN1}. 
Let $n_1$, $n_2$, $n_3$, $n_4$, and $n_5$ be integers and observe that any real number $\alpha$ satisfying
$$A^{n_5} B_\alpha^{n_4} A^{n_3} B_\alpha^{n_2} A^{n_1} \cdot 0 = \frac{1}{2}$$
is non-free, by the orbit test (Proposition \ref{Orbit_Test}). 
Next, we regard $x := n_1 \in \ZZ$ and $y := n_2 \in \ZZ$ as variables and $n_3$, $n_4$, and $n_5$ as constants. 
For convenience, we again put $r := 2 n_5 - 1$ and $s := 2 n_3 + 2 n_5 - 1.$ 
If $x$, $y$, $n_3$, and $n_4$ are nonzero, then the real numbers
$$\alpha_\pm := \frac{-(s x y + n_4 r x + n_3 n_4 r) \pm \sqrt{(s x y + n_4 r x + n_3 n_4 r)^2 - 4 (n_3 n_4 r x y) (2 x + s)}}{2 n_3 n_4 r x y}$$
are non-free (c.f. Formula \eqref{alpha_pm}).
Again, the key point is to make the value
$$(s x y + n_4 r x + n_3 n_4 r)^2 - 4 (n_3 n_4 r x y) (2 x + s)$$
a square integer.

We fix the values $n_3 = -1$, $n_4 = 1$, and $n_5 = 0$.
Then we have $r = -1$, $s = -3$, and
$$\alpha_\pm = \frac{(3 x y + x - 1) \pm \sqrt{(3 x y + x - 1)^2 - 4 x y (2 x - 3)}}{2 x y}.$$
By arranging the value $(3 x y + x - 1)^2 - 4 x y (2 x - 3)$ with respect to $x$, we get
\begin{align} \label{NinSqRt}
(9 y^2 - 2 y + 1) x^2 + (6 y - 2) x + 1.
\end{align}
By Lemma \ref{EqnSolvingLem1}, the coefficient $9 y^2 - 2 y + 1$ of $x^2$ is a non-square positive integer for any nonzero $y \in \ZZ$.
Therefore, we can always find some $x \in \ZZ - \{ 0 \}$ that makes Formula \eqref{NinSqRt} a square integer for any $y \in \ZZ - \{ 0 \}$, as in the proof of Theorem \ref{LimPtOfRRN2}.
The following claim provides an explicit integer $x \in \ZZ - \{ 0 \}$ that makes Formula \eqref{NinSqRt} a square integer, for each $y \in \ZZ - \{ 0 \}$.

\vspace{0.6cm}
\textbf{Claim:} For any $y \in \ZZ - \{ 0 \}$, put \\
\begin{align}
\begin{aligned}
x &= \begin{cases}
    -\frac{ 27}{2} y   +       3     & \quad (y \equiv 0 \pmod{2}) \\
    -\frac{243}{4} y^2 + \frac{3}{4} & \quad (y \equiv 1 \pmod{2}) \\
\end{cases} \\
&= \begin{cases}
    -\frac{3}{2} (9 y - 2)           & \quad (y \equiv 0 \pmod{2}) \\
    -\frac{3}{4} (9y-1)(9y+1) & \quad (y \equiv 1 \pmod{2}) \\
\end{cases}.
\end{aligned}
\end{align}
\indent Then Formula \eqref{NinSqRt} is a square integer. \\
\textit{Proof of the Claim.}
 Assume that $y$ is even. Then we get $$(9 y^2 - 2 y + 1) x^2 + (6 y - 2) x + 1 = \frac{1}{4} (81 y^2 - 27 y + 4)^2,$$
 which is a square integer. Next, assume that $y$ is odd. Then we have
$$(9 y^2 - 2 y + 1) x^2 + (6 y - 2) x + 1 = \frac{1}{16} (729 y^3 - 81 y^2 + 27 y + 1)^2,$$
which is a square integer. This completes the proof. \hfill $\blacksquare$
\vspace{0.6cm}

To apply the claim, first suppose that $y$ is even. 
Since $81 y^2 - 27 y + 4 > 0$ for all $y \in \ZZ$, we obtain
\begin{align}
\begin{aligned}
    \alpha_-
    & = \frac{-2 (3 x y + x - 1) + \sqrt{4 ((3 x y + x - 1)^2 - 4 x y (2 x - 3))}}{-4 x y} \\
    & = \frac{(81 y^2 + 9 y - 4) + \sqrt{(81 y^2 - 27 y + 4)^2}}{6 y (9 y - 2)} \\
    & = \frac{162 y^2 - 18 y}{6 y (9 y - 2)} = \frac{3 (9 y - 1)}{9 y - 2} = 3 + \frac{3}{9 y - 2}.
\end{aligned}
\end{align}
Since $y$ is a nonzero even integer, putting $y = 2 n$, we conclude that every rational number of the form
$$\frac{3 (18 n - 1)}{2 (9 n - 1)} = 3 + \frac{3}{2 (9 n - 1)}$$
is non-free for any $n \in \ZZ - \{ 0 \}$. When $n = 0$, $3 + \frac{3}{2 (9 \cdot 0 - 1)} = \frac{3}{2}$. It is already known to be non-free so we completes the proof for the first family.

Next, we suppose that $y$ is odd.
Then we have
\begin{align}
\begin{aligned}
     \alpha_\pm 
    & =  \frac{-4 (3 x y + x - 1) \mp \sqrt{16 ((3 x y + x - 1)^2 - 4 x y (2 x - 3))}}{-8 x y}  \\
    & =  \frac{(729 y^3 + 243 y^2 - 9 y + 1) \mp \sqrt{(729 y^3 - 81 y^2 + 27 y + 1)^2}}{6 y (9 y + 1) (9 y - 1)}
\end{aligned}
\end{align}
By choosing one of $\alpha_{\pm}$ appropriately, we obtain
\begin{align}
    \frac{(729 y^3 + 243 y^2 - 9 y + 1) + (729 y^3 - 81 y^2 + 27 y + 1)}{6 y (9 y + 1) (9 y - 1)} = \frac{81 y^2 + 1}{3 y (9 y - 1)} = 3 + \frac{9 y + 1}{3 y (9 y - 1)}.
\end{align}
Since $y$ is an odd integer, putting $y = 2 n + 1$, we conclude that every rational number of the form
$$\frac{162 n^2 + 162 n + 41}{3 (2 n + 1) (9 n + 4)} = 3 + \frac{9 n + 5}{3 (2 n + 1) (9 n + 4)}$$
is non-free for any $n \in \ZZ$. This completes the proof for the second family.
\end{proof}

\begin{rmk}
\normalfont
From the proof, we can recover an element $ g \in G_{\alpha}$ so that $g \cdot 0 = \frac{1}{2}$.
For the first family of non-free rational numbers in Theorem \ref{ExplicitSeqsOfRRNConvTo3}, we put $y = 2 n$ and 
$$x = -\frac{3}{2} (9 y - 2) = -3 (9 n - 1).$$
Therefore, for all $n \in \ZZ - \{ 0 \}$,
$$B_\alpha A^{-1} B_\alpha^{2 n} A^{-3 (9 n - 1)} \cdot 0 = \frac{1}{2} \qquad \left( \alpha = 3 + \frac{3}{2 (9 n - 1)} \right).$$
Notice that the equation also holds for $n = 0$.

For the second family of non-free rational numbers in Theorem \ref{ExplicitSeqsOfRRNConvTo3}, we put $y = 2 n + 1$ and 
$$x = -\frac{3}{4} (9 y - 1) (9 y + 1) = -3 (9 n + 4) (9 n + 5).$$
Therefore, for all $n \in \ZZ$,
$$B_\alpha A^{-1} B_\alpha^{2 n + 1} A^{-3 (9 n + 4) (9 n + 5)} \cdot 0 = \frac{1}{2} \qquad \left( \alpha = 3 + \frac{9 n + 5}{3 (2 n + 1) (9 n + 4)} \right).$$
\end{rmk}


\bibliography{bib}
\bibliographystyle{abbrv}

\end{document}